\newtheorem{thm}{Theorem}[section]
\newtheorem{cor}[thm]{Corollary}
\newtheorem{lem}[thm]{Lemma}
\newtheorem{prop}[thm]{Proposition}
\newtheorem*{theorem*}{Main Theorem}
\newtheorem{corollary}{Corollary}
\theoremstyle{remark}
\newtheorem{defn}[thm]{Definition}
\newtheorem{rmk}[thm]{Remark}
\numberwithin{equation}{section}
\newcommand{\scal}[1]{\langle #1 \rangle}
\newcommand{\RR}{\mathbb{R}}
\newcommand{\HH}{\mathbb{H}}
\newcommand{\CC}{\mathbb{C}}
\newcommand{\DD}{\mathbb{D}}
\newcommand{\fol}{\mathcal{F}}
\DeclareMathOperator{\Iso}{Isom}
\newcommand{\SO}{\mathrm{SO}}
\newcommand{\OO}{\mathrm{O}}
\newcommand{\U}{\mathrm{U}}
\newcommand{\SU}{\mathrm{SU}}
\newcommand{\Sp}{\mathrm{Sp}}
\newcommand{\Spin}{\mathrm{Spin}}
\newcommand{\Sym}{\mathrm{Sym}}
\newcommand{\tr}{\mathrm{tr}}
\newcommand{\kk}{\mathfrak{k}}
\newcommand{\Ll}{\mathfrak{l}}
\def\ol{\overline}
\newcommand{\lra}{\longrightarrow}
\newcommand{\lmt}{\longmapsto}
\DeclareMathOperator{\diam}{diam}
\newcommand{\sphere}{\mathrm{\mathbb{S}}}
\begin{document}



\title{On homogeneous composed Clifford foliations}


\author{Claudio Gorodski}
\author{Marco Radeschi}

\date{\today}


\subjclass[2000]{}
\keywords{}


\begin{abstract}
We complete the classification, initiated by the second named author, 
of homogeneous singular Riemannian foliations of spheres 
that are lifts of foliations produced from Clifford systems. 
 \end{abstract}

\maketitle




A singular Riemannian foliation of a Riemannian manifold $M$ is, roughly speaking, a partition $\fol$ of $M$ into connected complete submanifolds, not necessarily of the same dimension, that locally stay at a constant distance one from 
another. Singular Riemannian foliations of round spheres $(\sphere^n,\fol)$ are of special importance since, other than producing submanifolds with interesting geometrical properties, they provide local models around a point 
of general singular Riemannian foliations.


The special case of singular Riemannian foliations in spheres 
whose leaves of maximal dimension have codimension one 
is better known as the case of \emph{isoparametric foliations}, and its 
study dates back to \'E. Cartan, who showed the existence of a number 
of non-trivial examples. However, 
his examples were all \emph{homogeneous}, i.e., given as orbits of isometric 
group actions on $\sphere^n$. The first inhomogeneous examples were found 
much later by Ozeki and Takeuchi~\cite{OT1}. A while later, Ferus, 
Karcher and M\"unzner~\cite{FKM} developed an algebraic framework based
on Clifford algebras (or, equivalently, Clifford systems, see 
subsection~\ref{clif-sys}) to construct a large family of examples of 
isoparametric foliations (so called of \emph{FKM-type}), including many 
inhomogeneous examples, and completely classified the homogeneous ones.  

Whereas the theory and classification of isoparametric foliations of spheres
are by now rather well understood, the situation of singular Riemannian 
foliations in higher codimensions is still largely 
\emph{terra incognita}. 
In~\cite{Rad14}, inspired by the ideas in~\cite{FKM}, two new classes of foliations were introduced. Namely, the class of \emph{Clifford foliations}, 
and the class of 
\emph{composed foliations} which properly contains the first one. 
A Clifford foliation $(\sphere^n,\fol_C)$ is constructed from a Clifford system $C$,
and a composed foliation $(\sphere^n,\fol_0\circ\fol_C)$ is constructed from $C$ and a 
singular Riemannian foliation $(\sphere^m,\fol_0)$ 
of a lower dimensional sphere. 
The natural question of determining which ones
are homogeneous was also solved in~\cite{Rad14}, with the exception
of composed foliations based on Clifford systems of type~$C_{8,1}$
and~$C_{9,1}$ (see subsection~\ref{clif-sys}). The goal of the present 
work is to deal with these two remaining, more involved cases.   

\begin{theorem*}\label{T:classification}
Let $(\sphere^{n},\fol_0\circ\fol_C)$ be a homogeneous composed foliation, with either $C=C_{8,1}$ or $C=C_{9,1}$.
\begin{itemize}
\item If $C=C_{8,1}$, then $n=15$, and there are exactly six examples of
 homogeneous foliations $(\sphere^{15},\fol_0\circ\fol_{C_{8,1}})$, listed in Tables~\ref{Table 1} and~\ref{Table 4}.
\item If $C=C_{9,1}$, then $n=31$, and the only homogeneous foliation $(\sphere^{31}, \fol_0\circ \fol_{C_{9,1}})$ is the isoparametric one induced by 
the action of $\Spin(10)$ on $\sphere^{31}$ via the spin 
representation. In this case, $m=9$ and the corresponding foliation 
$(\sphere^9,\fol_0)$ consists of one leaf.
\end{itemize}
\end{theorem*}

There is a general idea that it should be possible to recover many geometric properties 
of the singular Riemannian foliations from the geometry of the 
underlying leaf (or quotient) space, 
compare e.g.~\cite{Lyt,L-T,Wiesen1,G-L3,G-L,G-L2,A-R}. In this regard,
it was shown in~\cite{Rad14} that Clifford foliations are characterized
as those singular Riemannian foliations of spheres whose leaf spaces
isometric to either a sphere or a hemi-sphere of constant curvature~$4$. 
More generally, it was believed that any foliation whose leaf space 
has constant curvature~$4$ should be a composed foliation. Our 
result shows that this belief is now dismissed. 
Comparing our Main Theorem with~\cite[Table~II]{St} and \cite[Table~1]{G-L3}, 
we observe that there are exactly two homogeneous 
foliations on $\sphere^{31}$ whose quotient space has constant sectional 
curvature~$4$ and which are not composed, namely, 
those given by the orbits of $\Spin(9)$ and $\Spin(9)\cdot\SO(2)$ 
actions on $\sphere^{31}$ with 
quotient a quarter of a round sphere~$\frac12\sphere^3_{++}$ and 
an eighth  of a round sphere~$\frac12\sphere^3_{+++}$, respectively. 
Together with results in~\cite{Rad14,G-L3} this implies:

\begin{corollary}
The foliations given by the $\Spin(9)$ and $\Spin(9)\cdot\SO(2)$ 
actions on $\sphere^{31}$ are the only homogeneous foliations of 
spheres 
whose leaf space has constant curvature $4$
and which are not composed.
\end{corollary}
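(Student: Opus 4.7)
The plan is to prove the corollary by a direct comparison between two classification lists that are now both complete. On one side, \cite[Table~1]{G-L3} together with \cite[Table~II]{St} provides the full roster of homogeneous singular Riemannian foliations of round spheres whose leaf space has constant curvature~$4$; this is the ``supply side''. On the other side, by combining \cite{Rad14} (which classifies homogeneous composed foliations arising from all Clifford systems except $C_{8,1}$ and $C_{9,1}$) with the Main Theorem (which handles precisely those two leftover cases), we now have a complete list of homogeneous composed foliations of spheres. The corollary will follow once we verify that the set-theoretic difference between the two lists is exactly $\{\Spin(9),\,\Spin(9)\cdot\SO(2)\}$ acting on $\sphere^{31}$.

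Concretely, I would go through \cite[Table~1]{G-L3} entry by entry, flagging each homogeneous foliation whose quotient carries a metric of constant curvature~$4$ (as a round sphere or a spherical orbifold). For each flagged entry, I would attempt to realize it as a composed foliation $\fol_0\circ\fol_C$ by matching it, up to foliation equivalence, against the examples from \cite{Rad14} and from the Main Theorem. Because Clifford foliations were characterized in \cite{Rad14} as exactly those whose leaf space is a round sphere or hemisphere of curvature~$4$, the only flagged entries that can fail to be composed are those whose quotient has a non-trivial orbifold structure, which already drastically narrows the search.

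The main obstacle, and where most of the care has to go, is the bookkeeping in the matching process: one has to identify each composed foliation within the homogeneous list up to foliation equivalence (not merely group conjugacy), keeping track of the ambient dimension~$n$, the auxiliary sphere dimension~$m$ carrying $\fol_0$, and the precise quotient geometry. The delicate point arises on $\sphere^{31}$, where the $\Spin(9)$ and $\Spin(9)\cdot\SO(2)$ orbit foliations, having quotients $\tfrac12\sphere^3_{++}$ and $\tfrac12\sphere^3_{+++}$ of curvature~$4$, would a priori be the most natural candidates for composed foliations based on the Clifford system $C_{9,1}$. Here the Main Theorem intervenes decisively: it asserts that the \emph{only} homogeneous composed foliation built from $C_{9,1}$ is the isoparametric $\Spin(10)$ foliation (with $\fol_0$ the trivial single-leaf foliation on $\sphere^9$), whose partition into leaves is manifestly different from those of $\Spin(9)$ and $\Spin(9)\cdot\SO(2)$. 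Thus these two foliations cannot be composed, and a check of the remaining entries in \cite[Table~1]{G-L3} confirms that every other homogeneous foliation with curvature-$4$ quotient does appear in the composed list; this yields the corollary.
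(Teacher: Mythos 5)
Your overall strategy is the same as the paper's: the corollary is obtained there precisely by comparing the classification of homogeneous foliations with curvature-$4$ leaf space (\cite[Table~II]{St}, \cite[Table~1]{G-L3}) with the now-complete list of homogeneous composed foliations coming from \cite{Rad14} together with the Main Theorem, so in outline your plan reproduces the intended argument.

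There is, however, one inference that does not follow as you state it. For $\Spin(9)$ and $\Spin(9)\cdot\SO(2)$ you conclude ``thus these two foliations cannot be composed'' from the fact that the only homogeneous composed foliation built from $C_{9,1}$ is the $\Spin(10)$ one; but ``composed'' allows any Clifford system on $\RR^{32}$, and besides $C_{9,1}$ there are $C_{1,16}$, $C_{2,8}$ and the two classes of $C_{4,4}$. Moreover, this cannot be settled by matching quotient geometries: taking $C$ of type $C_{4,4}$ with $P_0P_1\cdots P_4=\pm\Id$ and $\fol_0$ the orbit foliation of $\SO(3)$ acting on $\RR^5=\RR^3\oplus\RR^2$ (trivially on the second factor), Theorem~\ref{homog}(2) shows that $\fol_0\circ\fol_C$ is a \emph{homogeneous composed} foliation of $\sphere^{31}$ whose quotient is again $\frac12\sphere^3_{++}$, i.e.\ the same space as $\sphere^{31}/\Spin(9)$; so the comparison must be made at the level of leaf partitions, as you yourself anticipate, and not only of quotients. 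The missing step is short but genuinely needed: if the $\Spin(9)$ or $\Spin(9)\cdot\SO(2)$ foliation were composed with respect to some $C\neq C_{9,1}$, then by Theorem~\ref{homog}(2) the Clifford foliation $\fol_C$ would be homogeneous, hence given by the diagonal orbits of $\SO(16)$, $\SU(8)$ or $\Sp(4)$ on $\RR^{16}\oplus\RR^{16}$, and these orbits would have to lie inside the $28$-dimensional principal leaves of the two foliations in question. This fails: the generic diagonal $\SO(16)$-orbits are $29$-dimensional; containment of the $28$-dimensional generic diagonal $\SU(8)$-orbits would force the two foliations to coincide although their quotients differ; and in the $\Sp(4)$ case each leaf would become invariant under a strictly larger connected group that is orbit-equivalent to $\Spin(9)$, resp.\ $\Spin(9)\cdot\SO(2)$, which the cited tables exclude. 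With this supplement (and the entry-by-entry identification of all remaining curvature-$4$ cases with composed foliations, which you correctly flag as bookkeeping), your proof is complete and agrees with the paper's.
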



The case of composed foliations $(\sphere^{15},\fol_{0}\circ\fol_{C_{8,1}})$ 
is also very interesting, as they coincide with those foliations that 
contain the fibers of the octonionic Hopf fibration 
$\sphere^{15}\to \sphere^8$. Based on the fact that the Cayley projective 
plane $\mathbb O P^2$ is the mapping cone of $\sphere^{15}\to\sphere^8$, 
it was shown in~\cite{Rad14} that there corresponds to any 
singular Riemannian foliation $(\sphere^8,\fol_0)$ 
a singular Riemannian foliation $(\mathbb O P^2,\tilde\fol_0)$ 
which is homogeneous if and only if $\fol_0\circ\fol_{C_{8,1}}$ is homogeneous. 
It thus follows from our Main Theorem that there is a large amount of
inhomogeneous foliations of $\mathbb O P^2$:

\begin{corollary}
The foliation $(\mathbb O P^2,\tilde\fol_0)$ is inhomogeneous for
any foliation $(\sphere^8,\fol_0)$ except for those six (homogeneous) 
examples listed in Tables~\ref{Table 1} and~\ref{Table 4}.
\end{corollary}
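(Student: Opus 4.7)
The plan is to deduce this corollary directly from the Main Theorem via the correspondence between singular Riemannian foliations of $\sphere^8$ and of the Cayley projective plane $\mathbb{O}P^2$ that was recalled in the paragraph preceding the statement. Since $\mathbb{O}P^2$ is the mapping cone of the octonionic Hopf fibration $\sphere^{15}\to\sphere^8$, every singular Riemannian foliation $(\sphere^8,\fol_0)$ extends canonically to a singular Riemannian foliation $(\mathbb{O}P^2,\tilde\fol_0)$ which contains the Hopf fibers and restricts to $\fol_0$ on the "sphere at infinity" $\sphere^8\subset\mathbb{O}P^2$. The critical input, proved in~\cite{Rad14}, is the equivalence:
\[
(\mathbb{O}P^2,\tilde\fol_0)\text{ is homogeneous} \iff (\sphere^{15},\fol_0\circ\fol_{C_{8,1}})\text{ is homogeneous.}
\]

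Granted this equivalence, the proof is essentially a translation. First I would cite the result from~\cite{Rad14} above to reduce the homogeneity question for $\tilde\fol_0$ on $\mathbb{O}P^2$ to the homogeneity question for the composed foliation $\fol_0\circ\fol_{C_{8,1}}$ on $\sphere^{15}$. Then I would invoke the $C=C_{8,1}$ half of the Main Theorem, which asserts that the only homogeneous such composed foliations are the six examples listed in Tables~\ref{Table 1} and~\ref{Table 4}. Pulling this classification back through the correspondence gives exactly six homogeneous $\tilde\fol_0$ on $\mathbb{O}P^2$, namely those coming from the six distinguished $\fol_0$ on $\sphere^8$; for any other choice of $\fol_0$, the corresponding $\tilde\fol_0$ must therefore be inhomogeneous.

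There is no genuine obstacle in this argument: the entire classification burden is absorbed into the Main Theorem, and the corollary is simply a repackaging of its $C_{8,1}$ half in the language of foliations of the Cayley projective plane. The only care required is to ensure that the correspondence $\fol_0\leftrightarrow\tilde\fol_0$ from~\cite{Rad14} is indeed a bijection preserving the homogeneity property, so that no homogeneous $\tilde\fol_0$ can arise from an inhomogeneous $\fol_0$; this is precisely the content of the cited result.
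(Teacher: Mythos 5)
Your argument is correct and coincides with the paper's own (implicit) proof: the corollary is deduced exactly as you describe, by combining the equivalence from~\cite{Rad14} between homogeneity of $(\mathbb O P^2,\tilde\fol_0)$ and of $(\sphere^{15},\fol_0\circ\fol_{C_{8,1}})$ with the $C_{8,1}$ case of the Main Theorem. No further argument is needed.
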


About this paper: after a section on preliminaries, we first consider the case
of foliations with closed leaves and treat  
the cases $C_{9,1}$ and~$C_{8,1}$ in separate sections, 
as they have very different features.
The short, last section is devoted to foliations with non-closed
leaves. 

It is our pleasure to thank Alexander Lytchak for several very informative 
discussions as well as for his hospitality during our stay at the University 
of Cologne. 

\section{Preliminaries}

In this section,
we quickly review some definitions and results from~\cite{Rad14}.
\subsection{Singular Riemannian foliations} 
\begin{defn}
Let $M$ be a Riemannian manifold, and $\fol$ a partition of $M$ into complete, connected, injectively immersed submanifolds, called \emph{leaves}. The pair $(M,\fol)$ is called:
\begin{itemize}
\item A \emph{singular foliation} if there is a family of smooth vector fields $\{X_i\}$ that spans the tangent space of the leaves at each point.
\item A \emph{transnormal system} if any geodesic starting perpendicular to a leaf stays perpendicular to all the leaves it meets. Such geodesics are called \emph{horizontal geodesics}.
\item A \emph{singular Riemannian foliation} if it is both a singular foliation and a transnormal system.
\end{itemize}
\end{defn}

Given a singular foliation $(M,\fol)$, the  \emph{space of leaves}, denoted by $M/\fol$, is the set of leaves of $\fol$ endowed with the topology induced by the \emph{canonical projection} $\pi:M\to M/\fol$ that sends a point $p\in M$ to the leaf $L_p\in \fol$ containing it.
If in addition the leaves of $\fol$ are closed, then $M/\fol$ inherits the structure of a Hausdorff metric space, by declaring the distance $d(\pi(p),\pi(q))$ to be equal to the distance $d(L_p,L_q)$ in $M$ between the corresponding leaves. Moreover, $M/\fol$ is stratified by smooth Riemannian manifolds, and the projection $\pi$ is global submetry, and a Riemannian submersion along each stratum.

\subsection{Clifford systems and Clifford foliations}\label{clif-sys}
A Clifford system, denoted by $C$, is an $(m+1)$-tuple $C=(P_0,\ldots, P_m)$ of symmetric transformations of a Euclidean vector space 
$V$ such that
\[
P_i^2=Id\quad\mbox{for all $i$,}\quad P_iP_j=-P_jP_i\quad\mbox{for all
$i\neq j$.}
\]
Two Clifford systems $(P_0,\ldots, P_{m})$, $(Q_0,\ldots, Q_m)$ are called \emph{geometrically equivalent} if there exists an element $A\in \OO(V)$ such that $(AP_0A^{-1},\ldots, AP_mA^{-1})$ and $(Q_0,\ldots, Q_m)$ span the same subspace in $\Sym^2(V)$. Geometric equivalence classes of Clifford systems are completely classified, and:
\begin{itemize}
\item A Clifford system $\{P_0,\ldots, P_m\}$ on $V$ exists if and only if $\dim V=2k\delta(m)$, where $k$ is a positive integer and $\delta(m)$ is given by:
\begin{equation*}\label{E:table-delta-m}
\begin{array}{|c||c|c|c|c|c|c|c|c|c|}\hline \;m\; & \;1\; & \;2\; &\; 3\; & \;4\; & \;5\; &\; 6\; & \;7\; &\; 8\; &\; 8+n\; \\\hline \delta(m) & 1 & 2 & 4 & 4 & 8 & 8 & 8 & 8 & 16\delta(n)\\ \hline \end{array}
\end{equation*}
Given integers $m$, $k$, we denote by $C_{m,k}$ any Clifford system consisting of $m+1$ symmetric matrices on a vector space of dimension $2k\delta(m)$.
\item If $m\not\equiv 0\mod4$, there exists a unique geometric equivalence class of Clifford system of type $C_{m,k}$, for any fixed $k$.
\item If $m\equiv0\mod4$, there exist exactly $\left\lfloor{\frac k2}\right\rfloor+1$ equivalence classes of Clifford systems of type $C_{m,k}$. They are distinguished by the invariant $|\tr(P_0P_1\cdots P_m)|$.
\end{itemize}

Given a Clifford system $C=(P_0,\ldots, P_m)$ on $\RR^{2l}$, $l=k\delta(m)$, we can define a map
\begin{align*}
\pi_C:\sphere^{2l-1}\subset \RR^{2l}&\lra \RR^{m+1}\\
x&\lmt (\scal{P_0x,x},\ldots, \scal{P_mx,x}).
\end{align*}
The \emph{Clifford foliation} $(\sphere^{2l-1},\fol_C)$ associated to $C$ is given by the preimages of the map $\pi_C$.
This foliation is a singular Riemannian foliation, it only depends on the geometric equivalence class of $C$, and its quotient is isometric to either a round sphere $\frac12\sphere^m$ if $l=m$, or a round hemisphere $\frac12\sphere^{m+1}_+$ if $l\geq m+1$.

\subsection{Composed foliations} Fix a Clifford system 
$C=C_{m,k}=(P_0,\ldots,P_m)$ with associated Clifford foliation $(\sphere^{n},\fol_C)$, and fix a singular Riemannian foliation $(\sphere^m,\fol_0)$. 
Alternatively, we can view $\fol_0$ as: a 
foliation of the boundary of the leaf space of 
$\fol_C$, namely $\partial(\sphere^n/\fol_C)=\partial(\frac12\sphere^{m+1}_+)$,
in case $l\geq m+1$; and a foliation of $\frac12\sphere^m$ in 
case~$l=m$.
Such a foliation can be extended  by homotheties to a foliation 
$(\frac12\sphere^{m+1}_+,\fol_0^h)$. The \emph{composed foliation} 
$(\sphere^n,\fol_0\circ\fol_C)$ is then defined by taking the 
$\pi_C$-preimages of the leaves of $\fol_0^h$. 

Given any Clifford system $C=C_{m,k}$ and any singular Riemannian 
foliation $(\sphere^m,\fol_0)$, the composed foliation 
$(\sphere^{n},\fol_0\circ\fol_C)$ is a singular Riemannian foliation.

\subsection{Homogeneous composed foliations}
Recall that a singular Riemannian foliation $(M,\fol)$ is called \emph{homogeneous} if its leaves are orbits of an isometric Lie group action $G\to \Iso(M)$. In \cite{Rad14} appears a complete classification of homogeneous Clifford foliation and a partial classification of composed foliations:

\begin{thm}[\cite{Rad14}]\label{homog}
Let $C=C_{m,k}=(P_0,\ldots,P_m)$ be a Clifford system on $\RR^{2l}$ and let $(\sphere^m,\fol_0)$ be a singular Riemannian foliation. Then:
\begin{enumerate}
 \item The Clifford foliation $(\sphere^{2l-1},\fol_C)$ is homogeneous if and only if $m=1$, $2$ or $m=4$ and $P_0P_1\cdots P_4=\pm Id$, in which cases it is respectively spanned by the orbits of the diagonal action of $\SO(k)$ on $\RR^k\times \RR^k$ ($m=1$), $\SU(k)$ on $\CC^k\times \CC^k$ ($m=2$) or $\Sp(k)$ on $\HH^k\times \HH^k$ ($m$=4).
\item If $C\neq C_{8,1}$, $C_{9,1}$ then $(\sphere^{2l-1},\fol_0\circ \fol_C)$ is homogeneous if and only if both $\fol_0$ and $\fol_C$ are homogeneous. If $C=C_{9,1}$ and $(\sphere^{2l-1},\fol_0\circ \fol_C)$ is homogeneous, then $\fol_0$ is homogeneous.
\end{enumerate}
\end{thm}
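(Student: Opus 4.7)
The plan splits naturally along the two parts of the theorem. Part~(1) is a classification of homogeneous Clifford foliations, which I would carry out by a direct verification in the low-codimension cases together with a reduction to the classical list of transitive actions on spheres. Part~(2) is a descent argument: a Lie group $G$ realizing $\fol_0\circ\fol_C$ should be shown, away from the exceptional cases, to act on the underlying Clifford system itself, so that it induces separate homogeneous structures on $\fol_C$ and on $\fol_0$.

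For part~(1), the sufficiency direction is a concrete check. The diagonal actions of $\SO(k)$, $\SU(k)$, $\Sp(k)$ on $\KK^k\oplus\KK^k$ for $\KK=\RR,\CC,\HH$ have as basic invariants the squared norms of the two factors and their $\KK$-valued inner product; these are precisely the components of $\pi_C$ for the natural Clifford system given by $P_0(a,b)=(b,a)$, $P_1(a,b)=(a,-b)$, together with further $P_j$'s built from left multiplication by the imaginary units of $\KK$ when $\KK\in\{\CC,\HH\}$. For the converse, suppose $\fol_C$ is realized by $G\leq\OO(2l)$. The quotient $\sphere^{2l-1}/\fol_C$ is a round (hemi-)sphere of curvature $4$, and its boundary focal stratum consists of the $\pm 1$-eigenspaces of the Clifford transformations $P=\sum t_iP_i\in V_C$. $G$ must permute these focal manifolds, forcing $V_C=\scal{P_0,\ldots,P_m}\subset \Sym^2(V)$ to be $G$-invariant and $G$ to act transitively on the unit sphere $\sphere^m\subset V_C$. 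A dimension count on the regular leaves together with the Montgomery--Samelson--Borel classification of transitive actions on spheres then restricts $m\in\{1,2,4\}$, and rules out $m=4$ unless $P_0\cdots P_4=\pm\Id$.

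For part~(2), assume $G$ realizes $\fol_0\circ\fol_C$. The singular leaves of $\fol_0\circ\fol_C$ include the focal manifolds of $\fol_C$, which I would identify by an intrinsic dimension/curvature criterion independent of $\fol_0$. Since $G$ permutes these, the same argument as in part~(1) shows that $V_C$ is $G$-invariant, yielding a homomorphism $\varphi:G\to\OO(V_C)\cong\OO(m+1)$. The image $\varphi(G)$ acts on $\sphere^{2l-1}/\fol_C$ with orbits exactly the leaves of $\fol_0^h$, so $\fol_0$ is homogeneous in every case for which this descent is valid; in particular this gives the $C_{9,1}$ statement. The kernel $H=\ker\varphi$ preserves each leaf of $\fol_C$, and if $H$ acts transitively on a regular $\fol_C$-leaf, then $\fol_C$ itself is homogeneous, so by part~(1) one of $C_{1,k}$, $C_{2,k}$, $C_{4,k}^{\pm}$ must occur. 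The converse implication in~(2), that homogeneity of $\fol_0$ and $\fol_C$ separately implies homogeneity of $\fol_0\circ\fol_C$, is immediate by taking a suitable product of realizing groups.

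The main obstacle is the intrinsic characterization of the focal manifolds of $\fol_C$ inside the leaf poset of $\fol_0\circ\fol_C$, needed for $G$ to descend canonically to $\OO(V_C)$, together with proving transitivity of $H$ on a regular $\fol_C$-leaf. Separating Clifford-type focal data from focal data created by $\fol_0^h$ is delicate precisely when their dimensions coincide, and showing that $H$ must act transitively can fail when there exist sporadic isometric actions (such as the $\Spin(9)$- and $\Spin(10)$-actions on $\sphere^{15}$ and $\sphere^{31}$) which realize a composed foliation without being compatible with any preassigned Clifford decomposition. These phenomena are exactly what makes $C_{8,1}$ and $C_{9,1}$ exceptional and what motivates the present paper's separate treatment of those cases.
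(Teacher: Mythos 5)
First, note that the paper does not prove this statement at all: it is quoted verbatim from~\cite{Rad14}, so the only proof to measure your sketch against is Radeschi's, and your outline does not reconstruct it. More importantly, your converse argument in part~(1) rests on a false deduction. If the orbits of $G$ are exactly the leaves of $\fol_C$, then $G$ preserves every leaf, hence acts \emph{trivially} on the leaf space; it cannot ``permute the focal manifolds'' nontrivially, and it certainly cannot act transitively on the unit sphere of $V_C=\scal{P_0,\ldots,P_m}$ --- transitivity there would move the fiber over one boundary point to the fiber over another, contradicting orbits~$=$~leaves. What the hypothesis actually forces is the opposite: $\scal{P_iAv,Av}=\scal{P_iv,v}$ for all $v$ and all $A\in G$, i.e.\ $G$ lies in the \emph{centralizer} of the Clifford system. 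The correct route (and essentially the one in~\cite{Rad14}) is then to identify this centralizer --- $\SO(k)$, $\U(k)$, $\Sp(k)$-type groups depending on $m$ and on $P_0\cdots P_m$ when $m\equiv 0 \bmod 4$ --- and check when it is transitive on every $\pi_C$-fiber; your mechanism, as stated, proves nothing and points in the wrong direction.

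In part~(2) the two steps you flag as ``the main obstacle'' are in fact the entire content of the theorem, and they are not supplied. You need (i) that a group $G$ realizing $\fol_0\circ\fol_C$ preserves the fibration $\pi_C$, i.e.\ that $\fol_C$ can be recovered canonically inside $\fol_0\circ\fol_C$, and (ii) that the kernel of the induced map $G\to\OO(m+1)$ is transitive on the $\pi_C$-fibers. Your starting premise for (i) is already false in general: the focal spheres of $\fol_C$ are \emph{not} leaves of $\fol_0\circ\fol_C$, only subsets of typically larger leaves, so no poset-level or dimension/curvature identification is available without genuine work. And (ii) cannot be a formal consequence of the setup, since it demonstrably fails for $C_{8,1}$: the octonionic Hopf fibration $\fol_{C_{8,1}}$ is inhomogeneous, yet the $\Spin(9)$-orbits on $\sphere^{15}$ form a homogeneous composed foliation over it, so any valid argument must isolate exactly where $C_{8,1}$ and $C_{9,1}$ escape --- which your sketch acknowledges but does not do. The only parts of the proposal that stand are the easy sufficiency checks (the diagonal $\SO(k)$, $\SU(k)$, $\Sp(k)$ actions, and building a realizing group for $\fol_0\circ\fol_C$ from realizing groups of $\fol_0$ and $\fol_C$ via a lift through $\Spin(m+1)$ in the normalizer of the Clifford system); the classification directions, which are what the theorem asserts, remain unproven.
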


By the classification of Clifford systems, both
$C_{8,1}$ and $C_{9,1}$ consist of a unique geometric equivalence class of Clifford systems. Moreover, for $C=C_{8,1}$ the corresponding Clifford foliation $(\sphere^{15},\fol_C)$ is given by the fibers of the octonionic Hopf fibration $\sphere^{15}\to\frac12\sphere^8$, while for $C=C_{9,1}$ the Clifford foliation $(\sphere^{31},\fol_C)$ is given by the fibers of $\pi_C:\sphere^{31}\to\frac12\sphere^{10}_+$.

\section{The case \texorpdfstring{$C=C_{9,1}$}{c91}}\label{c91}

In this section we will show that there are no new examples of 
homogeneous composed foliations originating from the Clifford system 
$C=C_{9,1}$. More precisely, we will see that a composed foliation
$(\sphere^{31},\fol_0\circ\fol_C)$ is homogeneous if and only if 
$\fol_0$ is the codimension one foliation of $\sphere^{10}_+$ consisting 
of concentric $9$-spheres; recall that in that case,
the composed foliation is the isoparametric 
foliation $\tilde\fol_C$ of FKM-type given by the orbits 
of the spin representation $\Spin(10)\to\SO(32)$~\cite{FKM}.
Recall also that the maximal
connected Lie
subgroup of $\SO(32)$ whose orbits coincide with the leaves
of $\tilde\fol_C$ is 
$\Spin(10)\cdot\U(1)=\Spin(10)\times_{\mathbb Z_4}\U(1)$~\cite{D,EH}.

In this section we will only consider closed Lie subgroups of $\SO(32)$,
which correspond to \emph{proper} isometric actions on $\sphere^{31}$,
and postpone the case of non-closed Lie subgroups to section~\ref{non-closed}.
So suppose the leaves of 
$\mathcal F_0\circ\mathcal F_C$ are orbits
of a closed connected Lie subgroup $G$ of $\SO(32)$. 
Since $\fol_0\circ\fol_C$ is contained in $\tilde\fol_C$, i.e.~the leaves 
of $\mathcal F_0\circ\mathcal F_C$ are contained in those of 
$\tilde\fol_C$, $G$ preserves each leaf of $\tilde\fol_C$. 
By the above maximality property,
$G\subset \Spin(10)\cdot\U(1)$.

\begin{lem}\label{lem}
The foliation $(\sphere^{31},\fol)$ induced by 
$G\subset \Spin(10)\cdot\U(1)$ is of the form $\fol_0\circ\fol_C$ 
if and only if $\fol_C$ is contained in $\fol$.
\end{lem}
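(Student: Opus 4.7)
The forward implication is built into the definition of composed foliation: if $\fol = \fol_0 \circ \fol_C$, each leaf of $\fol$ is a $\pi_C$-preimage of some leaf of $\fol_0^h$, hence a union of $\pi_C$-fibers, i.e., of leaves of $\fol_C$.

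For the reverse implication, my plan is to push $\fol$ down through $\pi_C$ and identify the result with a homothetic extension. Under the hypothesis $\fol_C \subseteq \fol$, each $G$-orbit is a union of $\pi_C$-fibers, so $\fol$ descends to a well-defined partition $\bar\fol$ of the quotient $\sphere^{31}/\fol_C = \frac12\sphere^{10}_+$, whose leaves are the orbits of the induced $G$-action. Combined with the fact, noted just before the lemma, that $G$ preserves each leaf of $\tilde\fol_C$, together with the observation that the leaves of $\tilde\fol_C$ are the $\pi_C$-preimages of the concentric $9$-spheres around the pole of $\frac12\sphere^{10}_+$, this forces every leaf of $\bar\fol$ to lie inside exactly one such concentric $9$-sphere.

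Next, I would unpack the induced action of $\Spin(10)\cdot\U(1)$ on $\frac12\sphere^{10}_+$ itself. The defining formula for $\pi_C$ and the conjugation action on the Clifford system show that $\pi_C$ is equivariant for the vector representation $\Spin(10)\to\SO(10)$. On the other hand, the $\U(1)$ factor is generated by a complex structure $J$ commuting with all of the $P_i$, so $\pi_C(e^{\theta J}x) = \pi_C(x)$, meaning $\U(1)$ acts trivially on the quotient. Therefore the induced $G$-action on $\frac12\sphere^{10}_+$ factors through a subgroup $\bar G \subseteq \SO(10)$ acting linearly on $\RR^{10}$ with the pole as its fixed point. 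Linearity then forces the $\bar G$-orbits on different concentric $9$-spheres to be homothetic copies of one another, so letting $\fol_0$ be the foliation of $\partial(\frac12\sphere^{10}_+) = \frac12\sphere^9$ by $\bar G$-orbits, one immediately gets $\fol_0^h = \bar\fol$ leaf by leaf, hence $\fol = \pi_C^{-1}(\fol_0^h) = \fol_0 \circ \fol_C$.

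The main obstacle I anticipate is securing the radial structure of the induced action on the leaf space: specifically the triviality of the $\U(1)$ factor and the identification of the $\Spin(10)$-action with the vector representation. These are standard features of the FKM setup, but they must be laid out carefully because the whole argument hinges on them. Once they are in place, the rest is essentially formal — $\fol_0$ is automatically a singular Riemannian foliation of $\frac12\sphere^9$ since it is the orbit partition of an isometric action, and the equality $\fol_0^h = \bar\fol$ follows directly from the linearity of $\bar G$ on $\RR^{10}$.
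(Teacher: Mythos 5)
The forward implication and the first reduction (the partition descends to $\sphere^{31}/\fol_C$) are fine, but the reverse implication has a genuine error at its crucial step: the identification of the $\U(1)$ factor. The circle centralizing the spin representation of $\Spin(10)$ on $\RR^{32}$ is generated by a complex structure $J$ which, up to sign, is the volume element $P_0P_1\cdots P_9$ of the Clifford system. Since the algebra generated by $P_0,\dots,P_9$ is the full algebra of real $32\times 32$ matrices, no complex structure commutes with all the $P_i$; in fact $J$ \emph{anti}commutes with each $P_i$. Consequently $\scal{P_i\,e^{\theta J}x,\,e^{\theta J}x}=\scal{P_i\,e^{2\theta J}x,\,x}=\cos 2\theta\,\scal{P_ix,x}+\sin 2\theta\,\scal{P_iJx,x}$, so a generic element of $\U(1)$ does not even map $\pi_C$-fibers to $\pi_C$-fibers (it only preserves the coarser FKM foliation $\tilde\fol_C$, i.e.\ the level sets of $|\pi_C|$); in particular $\U(1)$ neither acts trivially on the quotient nor descends to it. Your factorization of the induced action through the vector representation $\SO(10)$ therefore fails exactly for subgroups $G\subset\Spin(10)\cdot\U(1)$ with a nontrivial $\U(1)$-component, and these ($\U(5)\cdot\U(1)$, $\Spin(10-k)\cdot\Spin(k)\cdot\U(1)$, $\Spin(9)\cdot\U(1)$, \dots) are precisely the groups to which the lemma is subsequently applied. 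There is also a secondary gap upstream: the hypothesis $\fol_C\subseteq\fol$ only says that each $G$-orbit is a union of fibers; it does not by itself give that each individual element of $G$ permutes the fibers, so the very existence of an ``induced $G$-action'' on $\sphere^{31}/\fol_C$ (whose orbits you take $\bar\fol$ to be) requires proof.

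The paper's argument avoids both points and does not use any linear structure on the quotient. It observes that $G$, being inside $\Spin(10)\cdot\U(1)$, preserves $M_+=\pi_C^{-1}(\text{pole})$; since $G$ acts by isometries, every orbit projects to a subset of $\frac12\sphere^{10}_+$ at constant distance from the pole, hence lying entirely in the interior or entirely in the boundary, so each $L/\fol_C$ is a submanifold. The resulting partition $\{L/\fol_C\}_{L\in\fol}$ is a singular Riemannian foliation of $\frac12\sphere^{10}_+$ with the pole as a $0$-dimensional leaf, and the Homothetic Transformation Lemma (\cite[Lemma~1.1]{Rad12}) --- not linearity of any induced action --- shows it is the homothetic extension $\fol_0^h$ of its boundary restriction $\fol_0$, whence $\fol=\fol_0\circ\fol_C$. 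If you wish to salvage your route, you would first have to prove that $G$ permutes the $\pi_C$-fibers and that the induced isometries of the hemisphere fix the pole and hence commute with the radial homotheties; at that point you are essentially reproducing the homothetic-invariance argument, and the $\SO(10)$-equivariance and $\U(1)$-triviality claims are neither available nor needed.
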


\begin{proof}
The only if part is clear. Suppose now that the orbits of $G$ contain the 
leaves of $\fol_C$. Any element in $\Spin(10)\cdot \U(1)$ preserves the 
submanifold $M_+\subset \sphere^{31}$ defined as the preimage of the north pole 
of $\sphere^{31}/\fol_C=\frac12\sphere^{10}_+$, and therefore so does $G$. 
Since $G$ acts by isometries, the projection of any $G$-orbit to the 
quotient $\frac12\sphere^{10}_+$ is either entirely contained in the interior 
of $\frac12 \sphere^{10}_+$ or entirely contained in the boundary. It follows 
that for every leaf $L$ of $\fol$, the restriction $(L,\fol_C|_L)$ is a 
regular foliation, and its quotient $L/\fol_C\subset \frac12 \sphere^{10}_+$ is 
a submanifold. The partition $\{L/\fol_C\}_{L\in \fol}$ is easily seen to 
form a singular Riemannian foliation $\fol_0^h$ on $\frac12\sphere^{10}_+$ 
with the north pole as a $0$-dimensional leaf and, by the Homothetic 
Transformation Lemma (see e.g.~\cite[Lemma~1.1]{Rad12}), this foliation 
is determined by its restriction $\fol_0$ on the boundary 
$\frac12\sphere^{9}$. By definition of composed foliation, $\fol$ is 
of the form $\fol_0\circ\fol_C$.
\end{proof}

It follows from Lemma~\ref{lem} that we need only consider maximal 
connected closed subgroups of $\Spin(10)\cdot\U(1)$. 

The orbital geometry of the spin representation $\Spin(10)\to\SO(32)$
(or its extension to $\Spin(10)\cdot\U(1)$)
is well understood. The orbit space $\sphere^{31}/\Spin(10)$ is isometric
to an interval of length $\pi/4$, where the endpoints parametrize 
singular orbits $M_+$, $M_-$ of dimensions $21$ and~$24$ 
(cf.~\cite[p.~436]{HPT};
see also~\cite[pp.~8-9]{Bry} for a more elementary discussion). The orbit 
$M_+$ is particularly interesting, as it is also a leaf of 
$\fol_0\circ\fol_C$ for any homogeneous
foliation $\fol_0$ of $\frac12\sphere^{10}_+$, 
namely, the $\pi_C$-fiber over the origin 
of $\frac12\sphere^{10}_+$. As a homogeneous space,
$M_+\cong \Spin(10)/\SU(5)\cong \Spin(10)\cdot \U(1)/\U(5)$
(this also follows from the fact that $M_+$ is the orbit of a 
highest weight vector of the spin representation). 
Since $G$ is transitive on $M_+$, we must have $\dim G\geq21$. 

The maximal connected closed subgroups of $\Spin(10)\cdot \U(1)$ are,
up to conjugacy,
\[ \Spin(10),\ \U(5)\cdot \U(1),\ \Spin(10-k)\cdot \Spin(k)\cdot \U(1) \]
for $k=1,\ldots,5$, and
\[ \rho(H)\cdot \U(1) \]
where $H$ is simple and $\rho$ is irreducible of real type and 
degree~$10$ (cf.~\cite{Dyn1}; see also~\cite[Prop.~8]{K-P}). We have already remarked that 
$\Spin(10)$ is an orbit equivalent subgroup of $\Spin(10)\cdot\U(1)$; 
we shall not need to discuss its subgroups, because they are subgroups 
of the other maximal subgroups of $\Spin(10)\cdot\U(1)$.  
In the sequel, 
we first analyse which of the other maximal subgroups of $\Spin(10)\cdot\U(1)$
can act transitively on $M_+$.

The group $\U(5)\cdot \U(1)$ cannot act transitively on $M_+$ since its semisimple
part $\SU(5)$ is coincides with an isotropy subgroup of $\Spin(10)$ on $M_+$. 

The simply-connected compact connected simple Lie
groups $H$ of rank at most~$5$ and dimension between $20$
and $44$ are $\Spin(7)$, $\Spin(8)$, $\Spin(9)$, $\Sp(3)$, $\Sp(4)$, $\SU(5)$
and $\SU(6)$; none
admits irreducible representations of real type and degree~$10$. 

In order to determine if the groups $\Spin(10-k)\cdot \Spin(k)\cdot \U(1)$ can act
transitively on $M_+$, one can compute the intersection of the Lie algebra
$\mathfrak{so}(10-k)\oplus\mathfrak{so}(k)$ with the 
$\mathfrak{so}(10)$-isotropy subalgebra $\mathfrak{su}(5)$.
It does not matter that the subalgebras are defined only up to conjugacy
(corresponding to the fact that one can choose a different 
basepoint in~$M_+$).
We view $\mathfrak{su}(5)$ inside $\mathfrak{so}(10)$ as consisting
of matrices of the form
\[ \left(\begin{array}{cc}A&B\\-B&A\end{array}\right) \]
where $A$, $B$ are real $5\times 5$ matrices, $A$ is skew-symmetric,
$B$ is symmetric of trace zero. 
A standard choice of embedding of 
$\mathfrak{so}(10-k)\oplus\mathfrak{so}(k)$ into $\mathfrak{so}(10)$
is given by matrices of the form
\[ \left(\begin{array}{cc}C&0\\0&D\end{array}\right) \]
where $C$, $D$ are skew-symmetric 
$(10-k)\times(10-k)$, resp. $k\times k$, matrix blocks.
Then their intersection is isomorphic 
to~$\mathfrak{su}(5-k)\oplus\mathfrak{so}(k)$.
Therefore 
the dimension of the $\Spin(10-k)\cdot\Spin(k)$-orbit 
through the basepoint is~$21-\frac{k(k-1)}2$ for $k\leq4$, and $10$ for $k=5$.  
We deduce that $\Spin(9)$ and $\Spin(9)\cdot\U(1)$ 
act transitively on $M_+$; besides
those, 
only $\Spin(8)\cdot \SO(2)\cdot \U(1)$ has a chance of acting transitively
on that manifold. In order to discard the latter group, we choose a 
different embedding of $\mathfrak{so}(8)\oplus\mathfrak{so}(2)$
into $\mathfrak{so}(10)$, namely, that in which the $(i,j)$-entry
is zero if $i\in\{1,2,3,4,6,7,8,9\}$ and $j\in\{5,10\}$
or $j\in\{1,2,3,4,6,7,8,9\}$ and $i\in\{5,10\}$.
Now $(\mathfrak{so}(8)\oplus\mathfrak{so}(2))\cap\mathfrak{su}(5)\cong
\mathfrak{s}(\mathfrak{u}(4)\oplus\mathfrak{u}(1))$ and the 
corresponding $\Spin(8)\cdot \SO(2)$-orbit has dimension $29-16=13$,
showing that $\Spin(8)\cdot \SO(2)\cdot \U(1)$ is not transitive 
on~$M_+$.

Finally, 
we need to show that the $\Spin(9)\cdot \U(1)$-orbits  
cannot coincide with the leaves of $\mathcal F_0\circ\mathcal F_C$
for any $\mathcal F_0$. Suppose the contrary for some $\fol_0$. 
Since $\pi_C:\sphere^{31}\to \frac12\sphere^{10}_+$ 
is equivariant with respect to the double covering $\Spin(10)\to \SO(10)$, 
we see that~$\SO(9)$ preserves the leaves of~$\mathcal F_0$. 
We already know that~$\mathcal F_0$ is homogeneous (Theorem~\ref{homog}(2)), 
and $\SO(9)$ is a maximal connected 
subgroup of $\SO(10)$. Therefore $\fol_0$ must be given by the 
orbits of~$\SO(9)$. It follows that the leaf space of 
$\fol_0\circ\fol_C$ is $\frac12\sphere^{10}_+/\SO(9)$, which is 
isometric to $\frac12\sphere^2_{++}$. 
On the other hand, the quotient space 
$\sphere^{31}/\Spin(9)\cdot \U(1)$ is 
one-eighth of a round sphere 
$\frac12\sphere^2_{+++}$~\cite[Table~II, Type~$\mathrm{III_4}$]{St}.  
We reach a contradition and deduce that 
$(\sphere^{31},\mathcal F_0\circ\mathcal F_C)$ cannot be homogeneous
under $\Spin(9)\cdot\U(1)$.

\begin{rmk}
Let $(\sphere^{10}_+,\fol_0)$ denote the homogeneous foliation given by the orbits of 
$\SO(9)$, and let $(\sphere^{31},\fol_0\circ\fol_C)$ the corresponding composed 
foliation. By the result above, $\fol_0\circ\fol_C$ is not homogeneous and, 
in particular, it is different from the homogeneous foliation induced by the orbits 
of $\Spin(9)\cdot\U(1)$. Nevertheless, both foliations have cohomogeneity $2$, 
and both have quotients of constant curvature~4. Moreover, they both contain the 
homogeneous foliation induced by the orbits of $\Spin(9)$. 
Since $\sphere^{31}/\Spin(9)=\frac12\sphere^3_{++}$, 
the orbits of $\Spin(9)$ have codimension 1 in the leaves of $\fol_0\circ\fol_C$, 
which makes $\fol_0\circ\fol_C$ very close to a homogeneous foliation.
\end{rmk}

\section{The case \texorpdfstring{$C=C_{8,1}$}{c81}}\label{c81}

In this section, we determine the list of homogeneous composed foliations 
originating from the Clifford system $C=C_{8,1}$. Namely, we determine
the orbit equivalence classes of 
the isometric group actions that yield such foliations.
In this section we only consider closed subgroups of $\SO(16)$ and 
defer the analysis of non-closed Lie subgroups to section~\ref{non-closed}.
The foliation $\fol_C$ is given by the fibers of the 
inhomogeneous octonionic Hopf fibration 
$\sphere^{15}\to\frac12 \sphere^8$. Fix 
a singular Riemannian foliation $(\sphere^8,\fol_0)$, and suppose that $\fol_0\circ \fol_C$ is homogeneous, given by the orbits of a 
closed connected subgroup $G$ of~$\SO(16)$. 
Recall that if $X$ denotes the leaf space
\[
X=\sphere^8/\fol_0
\]
then the orbit space $\sphere^{15}/G$ is isometric to $\frac12X$. In particular, the sectional curvature of (the regular part of)
$\sphere^{15}/G$ is everywhere $\geq 4$ and hence $G$ cannot act polarly,
unless it acts with cohomogeneity~1.

\subsection{Criteria to recognize composed foliations}\label{SS:extending}
Before we start the classification in detail,  we want to present some results that will be helpful to identify foliations that can be written as $\fol_0\circ \fol_{C}$, where $C=C_{8,1}$. We start with the straightforward remark
that a foliation $\fol$ can be written in the form $\fol_0\circ \fol_C$ if and only if every fiber of the Hopf fibration
$\sphere^{15}\to\sphere^8$ is contained in a leaf 
of~$\fol$ (compare Lemma~\ref{lem}). 
In particular, if $\fol$ is a homogeneous composed foliation induced by the action of a group $G\subset \SO(16)$, then any other group $\ol{G}$ with $G\subset \ol{G}\subset \SO(16)$ will also generate a homogeneous composed foliation.

As a special case of the above situation, 
which will be useful later on, suppose that  
$(\sphere^{15},\fol_0\circ \fol_C)$ is homogeneous given by the orbits of 
$G\subset\SO(16)$, and suppose that $(\sphere^8,\fol_0)$ is homogeneous given by 
the orbits of $H\subset \SO(9)$. Then for any group $\ol{H}\subset \SO(9)$ containing
$H$, there is a canonical enlargement $\ol{G}\subset \SO(16)$ of $G$ whose orbits
yield a composed folation, as follows. Since the Hopf fibration 
$\sphere^{15}\to \sphere^8$ is equivariant with respect to the covering map 
$\Spin(9)\to \SO(9)$, we can lift $\ol{H}$ to a group 
$\tilde{H}\subset\Spin(9)\subset \SO(16)$. Now $\ol{G}$ is defined 
as the closure of the subgroup in $\SO(16)$ generated by $G$ and $\tilde{H}$. 
By the discussion above,
the orbits of $\ol{G}$ define a homogeneous 
composed foliation on $\sphere^{15}$.

Next we prove a criterion to distinguish some foliations that cannot be written in the form $\fol_0\circ \fol_C$.
\begin{prop}\label{P:noncomposed}
  Let $(\sphere^{15},\ol{\fol})$ denote the homogeneous, codimension 1 foliation given by the orbits of $\Sp(2)\cdot\Sp(2)$, under the representation 
$\nu_2{\hat\otimes_{\mathbb H}}\nu_2^*$. 
Then any foliation $(\sphere^{15},\fol)$ which is contained in $\ol{\fol}$ (i.e., every leaf of $\fol$ is contained in a leaf of $\ol{\fol}$) cannot be written in the form $\fol_{0}\circ \fol_{C_{8,1}}$.
\end{prop}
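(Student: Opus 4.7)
The plan is to apply the criterion from the opening of Subsection~\ref{SS:extending}: a foliation $\fol$ of $\sphere^{15}$ is of the form $\fol_0\circ\fol_{C_{8,1}}$ precisely when every fiber of the octonionic Hopf fibration is contained in a single leaf of $\fol$. Since by hypothesis every leaf of $\fol$ sits in a leaf of $\ol{\fol}$, this reduces the proposition to proving the stronger statement that $\fol_{C_{8,1}}$ is \emph{not} contained in $\ol{\fol}$. Equivalently, one must show by contradiction that some Hopf fiber is not contained in any single orbit of $G=\Sp(2)\cdot\Sp(2)$.

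Assuming the contrary, every Hopf fiber lies in some $G$-orbit, so $G$ permutes Hopf fibers and descends to an isometric action on the leaf space $\sphere^{15}/\fol_{C_{8,1}}=\tfrac{1}{2}\sphere^8$. A principal $G$-orbit on $\sphere^{15}$ has dimension $14$ and, under the assumption, is fibered by Hopf $\sphere^7$'s over its $\pi_C$-image, so that image is $7$-dimensional and the descended action is cohomogeneity $1$ on $\sphere^8$. The descent is a Lie group homomorphism $\phi\colon G\to \Iso(\tfrac{1}{2}\sphere^8)=\OO(9)$, landing in $\SO(9)$ by connectedness -- in other words, a $9$-dimensional real representation of $G$ whose image acts with cohomogeneity $1$ on $\sphere^8$.

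The representation-theoretic step then produces a contradiction. Writing $G=(\Spin(5)\times\Spin(5))/\langle(-I,-I)\rangle$, every irreducible real representation of $G$ is an outer tensor product $V\boxtimes W$ in which $V$ and $W$ have matching central characters, so that either both factor through $\SO(5)$ or both are spinorial. The smallest spinorial irreducible of $\Spin(5)=\Sp(2)$ is the standard representation on $\mathbb{H}^2$, of real dimension $8$; thus the smallest $(\mathrm{spin}\boxtimes\mathrm{spin})$ real irreducible has real dimension $16$ (it is in fact exactly the starting $\nu_2\,\hat\otimes_{\mathbb H}\,\nu_2^*$-representation on $\mathbb{R}^{16}$). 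Since $16>9$, the representation $\phi$ must factor entirely through $\SO(5)\times\SO(5)$. The real irreducible representations of $\SO(5)$ have dimensions $1,5,10,14,\dots$, so up to swapping the two factors the only nontrivial $9$-dimensional real representation of $\SO(5)\times\SO(5)$ is $(V_5\boxtimes V_1)\oplus 4(V_1\boxtimes V_1)$. This has image $\SO(5)\subset\SO(9)$ acting irreducibly on a $5$-dimensional subspace and fixing a $4$-dimensional complement; its orbits on $\sphere^8$ are $4$-spheres, so the cohomogeneity is $4$, not $1$. This contradiction finishes the proof.

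The delicate point to treat with care is the descent step: one must verify that the single hypothesis that each Hopf fiber is contained in some $G$-orbit really forces $G$ to map Hopf fibers to Hopf fibers, so that a genuine isometric action on $\tfrac{1}{2}\sphere^8$ is produced, and that the induced principal orbit on $\sphere^8$ is truly $7$-dimensional. Once this is set up, the representation-theoretic impossibility is a short dimension count.
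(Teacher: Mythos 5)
Your reduction is fine and matches the paper's first step: if some $\fol\subset\ol{\fol}$ were of the form $\fol_0\circ\fol_{C_{8,1}}$, then every octonionic Hopf fiber would lie in a leaf of $\fol$, hence in an orbit of $G=\Sp(2)\cdot\Sp(2)$, so it suffices to rule that out. The problem is the next step, which you yourself flag as ``delicate'' but never carry out: from ``every Hopf fiber is contained in some $G$-orbit'' you cannot conclude that $G$ maps Hopf fibers to Hopf fibers, and without that there is no induced homomorphism $G\to\SO(9)$ and your representation-theoretic dimension count never gets off the ground. The two conditions are genuinely different: containment of the fibers in the orbits says the Hopf foliation is subordinate to the orbit foliation, while the descent requires $G$ to normalize the Hopf fibration. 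The general principle is false --- for any transitive action (e.g.\ $\SO(4)$ on $\sphere^3$, or $\Spin(9)\cdot\mathrm{something}$ larger on $\sphere^{15}$) every fiber is trivially contained in the single orbit, yet the group need not permute fibers. In the case at hand, $g\in G$ sends the Hopf fiber through $p$ to \emph{some} great $7$-sphere through $gp$ inside the same orbit, and to identify it with the Hopf fiber through $gp$ you would need a uniqueness statement (e.g.\ that the Hopf fibers are the only great $7$-spheres filling out the orbits), which is nontrivial and nowhere established in your argument. So as written the proof has a genuine gap at its central step; the representation-theoretic endgame, which is essentially correct (including the central-character argument excluding $\Delta\boxtimes 1\oplus\epsilon$), is downstream of it.

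The paper avoids this issue entirely: since every orbit would be a union of Hopf fibers, it suffices to exhibit one leaf of $\ol{\fol}$ that cannot be foliated by totally geodesic $7$-spheres at all --- no equivariance is needed. It takes the $10$-dimensional singular orbit $M_+\cong\Sp(2)$; such a foliation would give a Riemannian submersion with totally geodesic fibers, hence a fibration $\sphere^7\to\Sp(2)\to B$ with $B$ simply connected and $3$-dimensional, so $B=\sphere^3$, and then the homotopy exact sequence forces $\pi_6(\sphere^3)$ to be a quotient of $\pi_6(\Sp(2))=0$, contradicting $\pi_6(\sphere^3)\neq0$. If you want to salvage your approach you must either prove the fiber-preservation statement (for instance via a uniqueness/rigidity result for great-sphere fibrations subordinate to $\ol{\fol}$) or replace the descent by a non-equivariant obstruction of this topological kind.
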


\begin{proof}
If $\fol$ could be written as $\fol_0\circ \fol_{C_{8,1}}$, 
by the remarks above, so could $\ol{\fol}$. Therefore it is enough to 
prove the proposition for $\ol{\fol}$ and, to do so, 
it is enough to provide a leaf of $\ol{\fol}$ that cannot be foliated 
by totally geodesic 7-spheres. We thus consider the singular orbit~$M_+$ 
containing the point $\mathrm{Id}\in\mathrm{Hom}_{\mathbb R}(\HH^2,\HH^2)\cong 
\HH^2\otimes_{\HH} \HH^{2*}$, which is diffeomorphic to $\Sp(2)$. 

Suppose now that $M_+=\Sp(2)$ is foliated by totally geodesic $\sphere^7$. Then the leaves are all simply connected, which implies that there is no leaf holonomy, and thus the quotient $M_+/\fol$ is a manifold $B$ and $M_+\to B$ is a Riemannian submersion with totally geodesic fibers. Then it is also a fibration, and from the long exact sequence in homotopy, $B$ is simply connected (and 3-dimensional). Therefore it must be $B=\sphere^3$, and we have a fibration $\sphere^7\to \Sp(2) \to \sphere^3$. Again from the long exact sequence in homotopy, we have
\[
\pi_6(\Sp(2))\lra \pi_6(\sphere^3) \lra \pi_5(\sphere^7)=0
\]
However, on the one hand $\pi_6(\Sp(2))=0$ (for example, cf.~\cite{MT}), 
and on the other $\pi_6(\sphere^3)\neq0$, which gives a contradiction.
\end{proof}

As an application of Proposition \ref{P:noncomposed} above, consider the Clifford foliation $\fol_{\bar{C}}$ generated by $\bar{C}=(P_0,\ldots, P_4)$ with $P_0P_1P_2P_3P_4=\pm Id$. This foliation is homogeneous and given by the orbits of the diagonal action of $\Sp(2)$ on $\HH^2\oplus \HH^2$ 
(Theorem~\ref{homog}) and thus, by Proposition \ref{P:noncomposed} above, it cannot be written as $\fol_0\circ \fol_{C_{8,1}}$.  In fact, this is the only Clifford foliation of $\sphere^{15}$ with this property:

\begin{prop}\label{P:onlyCbar}
For any Clifford system $C'$ on $\RR^{16}$ with $C'\neq \bar{C}$, 
the foliation $\fol_{C'}$  can be written in the form 
$\fol_{C'}=\fol_0\circ\fol_{C_{8,1}}$, for some foliation $(\sphere^8,\fol_0)$.
\end{prop}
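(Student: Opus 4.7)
The plan is to show that every Clifford system $C'$ on $\RR^{16}$ with $C'\neq \bar C$ can, after choosing a suitable representative in its geometric equivalence class, be realized as a sub-system of $C_{8,1}$, meaning that its generators lie in $E:=\mathrm{span}(P_0,\ldots,P_8)$, the linear span in $\mathrm{Sym}^2(\RR^{16})$ of the generators of $C_{8,1}$. Granting this, each coordinate $\langle Q_i x,x\rangle$ of $\pi_{C'}$ becomes a linear combination of the coordinates $\langle P_j x,x\rangle$ of $\pi_{C_{8,1}}$, so $\pi_{C'}$ factors through $\pi_{C_{8,1}}$; hence every fiber of the octonionic Hopf fibration is contained in a leaf of $\fol_{C'}$, and by the criterion recalled at the beginning of Subsection~\ref{SS:extending} this yields the desired decomposition $\fol_{C'}=\fol_0\circ\fol_{C_{8,1}}$ for some foliation $\fol_0$ of $\sphere^8$.

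The main algebraic tool, a direct consequence of the Clifford relations $P_iP_j+P_jP_i=2\delta_{ij}\Id$, is that any tuple $(Q_0,\ldots,Q_{m'})$ of elements of $E$ that is orthonormal with respect to the coefficient inner product automatically satisfies $Q_i^2=\Id$ and $Q_iQ_j+Q_jQ_i=0$ for $i\neq j$, hence is itself a Clifford system on $\RR^{16}$. By the table of $\delta(m)$ in Subsection~\ref{clif-sys}, the Clifford systems occurring on $\RR^{16}$ have types $C_{1,8}$, $C_{2,4}$, $C_{3,2}$, $C_{4,2}$, $C_{5,1}$, $C_{6,1}$, $C_{7,1}$, and $C_{8,1}$. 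For every $m'\in\{1,2,3,5,6,7\}$, taking $(Q_0,\ldots,Q_{m'})=(P_0,\ldots,P_{m'})$ produces a sub-system of type $C_{m',k'}$ with $k'=8/\delta(m')$ on $\RR^{16}$, and since the classification provides a unique geometric equivalence class of that type, this sub-system is equivalent to the given $C'$.

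The only delicate case is $m'=4$, where two equivalence classes of $C_{4,2}$ exist, distinguished by $|\tr(Q_0 Q_1 Q_2 Q_3 Q_4)|\in\{0,16\}$: the class $\bar C$ is the one with value $16$, i.e.\ $Q_0\cdots Q_4=\pm\Id$, while the other class has trace $0$. To identify which class is realized by the sub-system $(P_0,\ldots,P_4)\subset C_{8,1}$, consider the volume element $\omega_9:=P_0\cdots P_8$. A direct use of the Clifford relations yields $\omega_9^2=\Id$ and $\omega_9$ commutes with every $P_i$; uniqueness of the equivalence class of $C_{8,1}$ then forces $\omega_9=\pm\Id$, so $P_0\cdots P_4=\pm(P_5P_6P_7P_8)^{-1}=\pm P_8P_7P_6P_5$. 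But any four-fold product of distinct $P_i$'s anticommutes with each of its factors and is therefore traceless by the standard trace-cycling argument. Hence the sub-system belongs to the class with trace $0$, which is exactly the class distinct from $\bar C$, completing the proof.

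The main obstacle is precisely this last identification in the $C_{4,2}$ case: the a priori possibility that the sub-system $(P_0,\ldots,P_4)$ of $C_{8,1}$ could represent $\bar C$ itself would invalidate the argument, so the combination of the volume-element computation with the tracelessness of four-fold products is essential for pinning down $\bar C$ as the unique exception.
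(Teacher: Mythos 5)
Your proposal is correct and follows essentially the same route as the paper: list the possible types $C_{m,k}$ on $\RR^{16}$, realize each class with $m\not\equiv 0 \pmod 4$ (and the non-$\bar{C}$ class of type $C_{4,2}$) as a sub-system $(P_0,\ldots,P_{m})$ of $C_{8,1}$, and observe that $\pi_{C'}$ then factors through the Hopf map $\pi_{C_{8,1}}$, so that $\fol_{C'}$ is composed. The one shaky point is the claim that uniqueness of the equivalence class of $C_{8,1}$ by itself ``forces'' $\omega_9=P_0\cdots P_8=\pm\Id$ --- this is true but requires an argument (e.g.\ the $\pm1$-eigenspaces of $\omega_9$ are invariant under all $P_i$, and no Clifford system with $m=8$ exists in dimension less than $16$, so neither eigenspace can be proper); in any case the detour through $\omega_9$ is unnecessary, since $P_0P_1P_2P_3P_4$ anticommutes with $P_5$ and is therefore traceless (in particular not $\pm\Id$), which directly shows that the sub-system $(P_0,\ldots,P_4)\subset C_{8,1}$ is not $\bar{C}$ --- a detail the paper leaves implicit and which you rightly make explicit.
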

\begin{proof}
Let $C_{8,1}=(P_0,\ldots, P_8)$ and, for every $i=1,\ldots,7$, let 
$C_i$ denote the sub-Clifford system $(P_0,\ldots, P_i)$. Since 
$\fol_{C_i}$ is given by the preimages of the map
\begin{align*}
\pi_{C_i}:\sphere^{15}&\to \RR^{i+1},\\
\pi_{C_i}(x)&=\big(\scal{P_0x,x},\ldots, \scal{P_ix,x}\big),
\end{align*}
it is clear that $\pi_{C_i}$ factors as $\pi_i\circ\pi_{C_{8,1}}$, 
where $\pi_{C_{8,1}}:\sphere^{15}\to \sphere^8$ is the Hopf fibration, 
and $\pi_i:\sphere^8\subset\RR^9\to \DD^{i+1}\subset\RR^{i+1}$ is the projection 
onto the first $i+1$ components. In particular, $\fol_{C_i}$ can be written as 
$\fol_0\circ \fol_{C_{8,1}}$, where $(\sphere^8,\fol_0)$ is given by the fibers 
of $\pi_i$. Notice that $\fol_0$ in this case is homogeneous and given by the 
orbits of $\SO(8-i)$, 
embedded in $\SO(9)$ as the lower diagonal block.

Moreover, any Clifford system $C'=C_{m,k}$ on $\RR^{16}$ must satisfy the equation $k\delta(m)=8$, and the only possibilities are
\[
(m,k)= (8,1), (7,1), (6,1), (5,1), (4,2), (3,2), (2,4), (1,8).
\]

For any $m\not\equiv 0\mod 4$ there is only one geometric equivalence class of Clifford systems, and therefore $C_{m,k}$ can be identified with the sub-Clifford system $C_m\subset C_{8,1}$. For $m\equiv 0\mod 4$ there are exactly 
$\lfloor\frac k2\rfloor+1$ geometrically distinct Clifford systems of type $C_{m,k}$. Therefore, there is a unique $C_{8,1}$, and two distinct classes of type $C_{4,2}$. One of them is $C_4\subset C_{8,1}$, which is composed by the discussion above, and the other is $\bar{C}$. Since this exhausts all possible Clifford systems on $\RR^{16}$, it follows that all of them are composed, with the exception of $\bar{C}$.
\end{proof}

Gathering all the information together, we obtain the following
\begin{cor}\label{C:composed}
A composed foliation $(\sphere^{15}, \fol_0\circ \fol_{C_{m,k}})$ can also be written as $\fol_0'\circ\fol_{C_{8,1}}$ for some $(\sphere^8,\fol_0')$, if and only if $C_{m,k}\neq \bar{C}$.
\end{cor}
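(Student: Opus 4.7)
My plan is to derive the corollary by chaining Propositions \ref{P:noncomposed} and \ref{P:onlyCbar} via the criterion recorded at the start of Subsection \ref{SS:extending}: a foliation $(\sphere^{15},\fol)$ is of the form $\fol_0'\circ\fol_{C_{8,1}}$ if and only if every fiber of the octonionic Hopf fibration $\sphere^{15}\to\sphere^8$ is contained in some leaf of $\fol$.

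For the \emph{if} direction, assume $C_{m,k}\neq\bar{C}$. Proposition \ref{P:onlyCbar} yields a foliation $(\sphere^8,\fol_0'')$ with $\fol_{C_{m,k}}=\fol_0''\circ\fol_{C_{8,1}}$, so by the criterion each Hopf fiber lies inside a leaf of $\fol_{C_{m,k}}$. Since by construction every leaf of $\fol_0\circ\fol_{C_{m,k}}$ is a union of leaves of $\fol_{C_{m,k}}$ (namely the $\pi_C$-preimage of a leaf of $\fol_0^h$), the same containment holds for $\fol_0\circ\fol_{C_{m,k}}$, and a second application of the criterion produces a foliation $(\sphere^8,\fol_0')$ with $\fol_0\circ\fol_{C_{m,k}}=\fol_0'\circ\fol_{C_{8,1}}$.

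For the \emph{only if} direction, assume $C_{m,k}=\bar{C}$. Theorem \ref{homog}(1) identifies $\fol_{\bar{C}}$ with the orbit foliation of the diagonal $\Sp(2)\subset\Sp(2)\cdot\Sp(2)$, hence $\fol_{\bar{C}}\subset\ol{\fol}$. My strategy is to upgrade this to the containment $\fol_0\circ\fol_{\bar{C}}\subset\ol{\fol}$ and then invoke Proposition \ref{P:noncomposed} to exclude a decomposition through $C_{8,1}$. Establishing the containment is the main obstacle, as $\Sp(2)\cdot\Sp(2)$ does not normalize the diagonal $\Sp(2)$, so the descent of $\ol{\fol}$ to the quotient $\frac12\sphere^5_+$ is only a partition, not a group orbit partition. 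In the homogeneous setting that drives this paper, however, any connected closed subgroup $G\subset\SO(16)$ whose orbits realize $\fol_0\circ\fol_{\bar{C}}$ must permute the leaves of the sub-foliation $\fol_{\bar{C}}$, hence sit inside the normalizer of $\fol_{\bar{C}}$ in $\SO(16)$, which coincides up to components with $\Sp(2)\cdot\Sp(2)$; the $G$-orbits are then contained in $\Sp(2)\cdot\Sp(2)$-orbits, i.e., in leaves of $\ol{\fol}$, and Proposition \ref{P:noncomposed} rules out any expression $\fol_0\circ\fol_{\bar{C}}=\fol_0'\circ\fol_{C_{8,1}}$.
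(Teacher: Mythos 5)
Your \emph{if} direction is correct and is essentially the paper's argument: Proposition~\ref{P:onlyCbar} handles $\fol_{C_{m,k}}$ itself, and since every leaf of $\fol_0\circ\fol_{C_{m,k}}$ is a union of leaves of $\fol_{C_{m,k}}$, the Hopf-fiber criterion passes to the composed foliation. The \emph{only if} direction, however, has a genuine gap. The corollary is a statement about \emph{arbitrary} composed foliations $\fol_0\circ\fol_{\bar C}$, not only homogeneous ones, so an argument that begins with ``any connected closed subgroup $G\subset\SO(16)$ whose orbits realize $\fol_0\circ\fol_{\bar C}$'' proves at best a restricted version of the claim. Moreover, even inside the homogeneous setting, the two assertions you rely on are left unproved and are not obvious: that such a $G$ must permute the leaves of the finer subfoliation $\fol_{\bar C}$ (an isometry preserving a coarser foliation need not respect a particular finer one contained in it), and that the group of isometries preserving $\fol_{\bar C}$ coincides, up to components, with $\Sp(2)\cdot\Sp(2)$. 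Neither is established in the paper, and as written your argument hinges on both.

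The containment you identify as ``the main obstacle'' is in fact immediate and requires no group theory at all, which is the route the paper takes. By definition, the leaves of $\fol_0\circ\fol_{\bar C}$ are the $\pi_{\bar C}$-preimages of the leaves of the homothetic extension $\fol_0^h$ on $\frac12\sphere^{5}_+$, and every leaf of $\fol_0^h$ lies on a fixed distance sphere around the north pole. Hence $\fol_0\circ\fol_{\bar C}$ is contained in $\fol_1\circ\fol_{\bar C}$, where $\fol_1$ is the trivial one-leaf foliation; and $\fol_1\circ\fol_{\bar C}$ is precisely the codimension-one (FKM) foliation $\ol{\fol}$ of Proposition~\ref{P:noncomposed}, given by the $\Sp(2)\cdot\Sp(2)$-orbits. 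Proposition~\ref{P:noncomposed} then applies directly to any $\fol_0\circ\fol_{\bar C}$, homogeneous or not. (As a side remark, your worry that $\Sp(2)\cdot\Sp(2)$ fails to normalize the leaf-generating copy of $\Sp(2)$ is also misplaced: in the model $\mathrm{Hom}_{\mathbb R}(\HH^2,\HH^2)$ the diagonal action on $\HH^2\oplus\HH^2$ corresponds to one factor of $\Sp(2)\cdot\Sp(2)$, which is normal; but the correct fix is simply to avoid the group-theoretic detour altogether.)
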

\begin{proof}
If $C_{m,k}\neq \bar{C}$ then, by Proposition \ref{P:onlyCbar}, $\fol_{C_{m,k}}$ can be written as $\fol_0'\circ\fol_{C_{8,1}}$ and, by the initial remark, the same holds for $\fol_0\circ\fol_{C_{m,k}}$ since it contains $\fol_{C_{m,k}}$. On the other hand, any composed foliation $\fol_0\circ \fol_{\bar{C}}$ is contained in the foliation $\fol_1\circ \fol_{\bar{C}}$ where $(\sphere^8,\fol_1)$ is the trivial foliation with one leaf. Since $\fol_1\circ\fol_{\bar{C}}$ coincides with the foliation $\ol{\fol}$ of Proposition \ref{P:noncomposed}, 
$\fol_0\circ\fol_C$ cannot be written as $\fol_0'\circ \fol_{C_{8,1}}$ for any $(\sphere^8,\fol_0')$.
\end{proof}
\medskip

We can now proceed with the classification of composed foliations 
of $\sphere^{15}$ homogeneous under a closed Lie group~$G$. 
The diameter of $X={\sphere^8/\fol_0}$ is either equal to~$\pi$, or 
it is at most~$\pi/2$. 
We will consider these two cases separately.

\subsection{Case I: $\diam X=\pi$} Suppose first that the diameter of $X$ is 
$\pi$. Then there is a copy of $\sphere^0\subset \sphere^8$ consisting of 
$0$-dimensional leaves, and $(\sphere^8,\fol_0)$ decomposes as a spherical join
\[
(\sphere^{8},\fol_0)=\sphere^0\star (\sphere^7,\fol_1),
\]
for some foliation $\fol_1$. In particular, $X$ is isometric to a spherical 
join $\sphere^0\star Y$, where $Y=\sphere^7/\fol_1$. In this case, $\frac12X$ has diameter $\pi/2$ (thus $G$ acts reducibly) and it contains two points 
$x_+$, $x_-$
at distance~$\pi/2$. Moreover, any unit speed geodesic in 
$\frac12X$ starting from $x_-$ meets $x_+$ at the same time $t=\pi/2$. 
Therefore the preimages $S_{\pm}$ of $x_{\pm}$ are orthogonal round spheres 
of curvature 1, i.e.,~they are the unit spheres of subspaces $V_{\pm}$ of 
$\RR^{16}$ such that $\RR^{16}=V_+\oplus V_-$. Since we are assuming that the 
$G$-orbits contain the fibers of the Hopf fibration, it must be 
$\dim S_{\pm}\geq 7$. Therefore equality must hold, and $\dim V_+=\dim V_-=8$. 
Moreover, $G$ acts transitively on $S_{\pm}$. Given $p\in S_+$, the isotropy 
$G_p$ acts on the unit sphere in the normal space $\nu_pS_+$, which is
isometric to $S_-$ via the 
map $v\mapsto \exp_p{\frac\pi2}v$.
Moreover, the foliation $(S_-, G_p)$ 
coincides with the infinitesimal foliation of 
$\fol_0$ at $\pi_C(p)\in \sphere^8$, 
which in turn coincides with $(\sphere^7,\fol_1)$. In particular, $\fol_0$ is 
homogeneous and given by the action of  $G_p$ on $\RR^9=\RR\oplus V_-$ given by 
$\epsilon\oplus \lambda|_{G_p}$, where $\epsilon:G\to \RR$ is the trivial 
representation, and $\lambda:G\to \SO(8)$ denotes the representation of 
$G$ on $V_-$ (or $V_+$).
\begin{rmk}\label{R:symmetric}
Since the infinitesimal foliation of $\fol_0\circ \fol_C$ at any point of $S_-$ coincides with the infinitesimal foliation at a point in $S_+$ (because they both coincide with $(\sphere^7,\fol_1)$), the slice representations at $S_+$ and $S_-$ must be orbit equivalent.
\end{rmk}
If the $G$ action on $S_{\pm}$ is not effective, then the kernels $K_{\pm}$ of $G\to \SO(V_{\pm})$ are normal subgroups of $G$ with $K_+\cap K_-=\{e\}$. 
Since $G$ is compact, it admits a normal subgroup~$L$ such that 
$G=K_+\cdot L\cdot K_-$, where $K_+\cdot L$ acts effectively on $S_-$ and $L\cdot K_-$ acts effectively on $S_+$. Let $\kk_+,\kk_-,\Ll$ denote the Lie algebras of $K_+$, $K_-$, $L$ respectively. From the list of all groups acting transitively on the $7$-sphere, we get the following possibilities:

\smallskip

\paragraph{1.~$\kk_+=\kk_-=0$} Then $G=L$ up to a finite cover, and the possible such representations are:

\begin{center}
\begin{tabular}{|c|c|c|}
\hline
Type&$G$ &$G\to \SO(16)$\\ \hline
I.1&$\SO(8)$&$\rho_8\oplus\rho_8$\\
I.2&$\SU(4)$&$\mu_4\oplus\mu_4$ \\
I.3&$\U(4)$&$\mu_4\oplus\mu_4$\\ \hline
II.1&$\Spin(8)$&$\Delta_8^+\oplus \Delta_8^-$\\
II.2&$\Spin(7)$&$\Delta_7\oplus \Delta_7$\\
II.3&$\SU(4)\cdot\U(1)$ & $\mu_4\hat\otimes(\mu_1^r\oplus\mu_1^s)$ ($r\neq s$) \\ \hline
III.1&$\Sp(2)$&$\nu_2\oplus\nu_2$\\
III.2&$\Sp(2)\cdot\Sp(1)$&$(\nu_2\hat{\otimes}\nu_1)^{\oplus 2}$\\
III.3&$\Sp(2)\cdot\U(1)$&$\nu_2\hat{\otimes}(\mu_1^r\oplus \mu_1^s)$\\ \hline
\end{tabular}
\end{center}

\medskip

The actions of type I induce the Clifford foliations $\fol_{C_{1,8}}$ and 
$\fol_{C_{2,4}}$ respectively (actions~I.2 and~I.3 are orbit equivalent) and, by 
Proposition~\ref{P:onlyCbar}, they indeed can be written as 
$\fol_0\circ \fol_{C_{8,1}}$. Therefore, the same is true for the 
foliations coming from actions of type~II, since each of them contains 
a foliation of type~I. On the other hand, the foliations of 
type~III are containted in the orbits of the representation
of $\Sp(2)\cdot\Sp(2)$ given by $\nu_2\hat\otimes_{\mathbb H}\nu_2^*$, 
and therefore are not of the form 
$\fol_0\circ \fol_{C_{8,1}}$ by Proposition \ref{P:noncomposed}. Therefore, 
the homogeneus composed foliations in this case are given by 
the orbits of the groups listed in Table~\ref{Table 1}, 
where we have put together orbit equivalent actions. 
As we have seen, the foliation $(\sphere^{8},\fol_0)$ is also homogeneous, 
given by the orbits of the isotropy group $H$ of $G$ 
at a certain point.

{\small

\begin{table}[!htb]
\begin{center}
\begin{tabular}{|c|c||c|c||c|}
\hline
$G$ for $\fol_0\circ\fol_C$&$G\to \SO(16)$&$H$ for $\fol_0$&$H\to \SO(9)$& $X$\\ \hline\hline
$\Spin(8)$&$\Delta_8^+\oplus \Delta_8^-$&$\Spin(7)$&$\epsilon\oplus\Delta_7$&$[0,\pi]$\\ \hline
$\SO(8)$&$\rho_8\oplus\rho_8$&$\SO(7)$&$\epsilon^2\oplus \rho_7$& $\sphere^2_+$ \\
$\Spin(7)$&$\Delta_7\oplus \Delta_7$&$\mathrm{G}_2$&$\epsilon^2\oplus\phi_7$&\\ \hline
$\SU(4)$&$\mu_4\oplus\mu_4$&$\SU(3)$&$\epsilon^3\oplus \mu_3$& $\sphere^3_+$ \\
$\U(4)$&$\mu_4\oplus\mu_4$&$\U(3)$&$\epsilon^3\oplus \mu_3$ &\\ \hline
$\SU(4)\cdot\U(1)$ & $\mu_4\hat\otimes(\mu_1^r\oplus\mu_1^s)$ ($r\neq s$)&$\U(3)\cdot\U(1)$&$\epsilon\oplus \mu_1^{r-s}\oplus \mu_3\otimes\mu_1^{-s}$& $\sphere^2_{++}$ \\ \hline
\end{tabular}
\end{center}
\caption{ $\diam X=\pi$, and $\kk_+=\kk_-=0$.}
\label{Table 1}
\end{table}

}

\begin{rmk}
(a) Any pair of equivalent or inequivalent $8$-dimensional
irreducible representations of $\Spin(8)$ could occur in the table, but 
some are not listed since they differ from the two listed by an outer 
automorphism of $\Spin(8)$. 
In particular those representations are not only orbit equivalent to a representation in the list, but their image in $\SO(16)$ is the same as the image of a
representation in the list. 




 \end{rmk}


\smallskip

\paragraph{2.~$\Ll=0$.} Then $G=K_+\cdot K_-$, 
and each $K_{\pm}$ acts transitively on $\sphere^7$. All these cases are orbit equivalent among themselves, and also to the first entry in~Table~1, so we get no 
new examples.



\smallskip

\paragraph{3.~$\Ll\neq0$ and $\kk_+\neq0$.} Since $L\cdot K_+$ is a nontrivial product, and it acts effectively and transitively on $S_-$, it must be 
\[ L\cdot K_+\in \{\Sp(2)\cdot\Sp(1),\SU(4)\cdot\U(1), \Sp(2)\cdot\U(1)\}. \]

If $L=\Sp(2)$, then 
the foliation 
is contained in the foliation of Proposition~\ref{P:noncomposed},
so it is not composed.

If $L=\SU(4)$ then $K_+=\U(1)$, and $K_-$ can be either 
$\U(1)$ or trivial.
Then $G$ is given by $\U(1)\cdot\SU(4)\cdot\U(1)$,
resp., $\U(1)\cdot\SU(4)$, and it acts via 
$(\mu_1\hat{\otimes}\mu_4)\oplus (\mu_4\hat{\otimes}\mu_1)$, 
resp., 
$(\mu_1\hat{\otimes}\mu_4)\oplus \mu_4$. 
Those actions are orbit equivalent to the representation of $\SU(4)\cdot\U(1)$ in Table \ref{Table 1} given by $\mu_4\hat{\otimes}(\mu_1^r\oplus\mu_1^s)$ with $r\neq s$ (including the case $(r,s)=(1,0)$). 

Finally, if $L=\Sp(1)$ or $\U(1)$, then $K_+$, $K_-\in \{\Sp(2),\SU(4)\}$ and the action has cohomogeneity 1, and they  
are all orbit equivalent to the first entry in Table~\ref{Table 1}.

Hence we get no new examples in this case.

\subsection{Case II: $\diam X\leq\pi/2$}In this case the diameter of $\sphere^{15}/G=\frac12X$ is at most $\pi/4$ and thus $G$ acts irreducibly. We 
distinguish between possible cases, according to the dimension of $X$.


Suppose first that $\dim X=1$, i.e., $\fol_0\circ \fol_C$ is an 
isoparametric family in $\sphere^{15}$. It follows from the 
classification of cohomogenity~1 actions in spheres that the only 
possible actions on $\sphere^{15}$ with quotient of diameter $\leq\pi/4$ 
are given by $\nu_2\hat\otimes\nu_2^*$ for $G_1=\Sp(2)\cdot\Sp(2)$, 
$\mu_2\hat\otimes\mu_4$ for $G_2=\mathrm{S}(\U(2)\cdot\U(4))$, and  
$\rho_2\hat\otimes\rho_8$ for $G_3=\SO(2)\cdot\SO(8)$ 
(or $\SU(2)\cdot\SU(4)$ and  $\SO(2)\cdot\Spin(7)$, 
which are orbit equivalent subgroups of $G_2$, $G_3$, resp.). 
By Proposition~\ref{P:noncomposed}, the action of $G_1$ is ruled out,
but the other two actions  
give rise to composed foliations; in fact, those actions yield 
foliations containing foliations given in
Table~1. Since $G_2$ and $G_3$ are contained in $\Spin(9)$, in each case
they project to a subgroup $H$ of $\SO(9)$ which generates a codimension one
isoparametric foliation $\fol_0$ in $\sphere^8$.
We summarize the discussion above in the following table:

\begin{table}[!htb]
\begin{center}
\begin{tabular}{|c|c||c|c||c|}
\hline
$G$ for $\fol_0\circ\fol_C$&$G\to \SO(16)$&$H$ for $\fol_0$&$H\to \SO(9)$& $X$\\ \hline\hline
$\SU(2)\cdot\SU(4)$ &$\mu_2\hat{\otimes}_{\CC}\mu_4$& $\SO(3)\cdot\SO(6)$&$\rho_3\hat\oplus \rho_6$&$[0,\pi/2]$\\ \hline
$\SO(2)\cdot\SO(8)$ &$\rho_2\hat{\otimes}_{\RR}\rho_8$& $\SO(2)\cdot\SO(7)$&$\rho_2\hat\oplus \rho_7$&$[0,\pi/2]$\\ \hline
\end{tabular}\end{center}
\caption{$\diam X=\pi/2$.}
\label{Table 4}
\end{table}

If $2\leq\dim X\leq 4$, then $G$ acts irreducibly on $\sphere^{15}$ with 
cohomogeneity $\leq 4$, and the action is not polar. 
From the classification of low cohomogeneity 
representations in~\cite{H-L,St,G-L}, it follows that $G$ must act on 
$\sphere^{15}$ with cohomogeneity 2, and 
there are exactly two
 possible actions, $\mu_2\hat\otimes_{\mathbb C}\nu_2$
for $G_1=\U(2)\cdot\Sp(2)$, and $S^3(\mu_1)\hat\otimes_{\mathbb H}\nu_2^*$ for
$G_2=\SU(2)\cdot\Sp(2)$~\cite[Table~II]{St}. 
Again these actions are ruled out by Proposition~\ref{P:noncomposed}.
In fact it is clear that $G_2$ is contained in $\Sp(2)\cdot\Sp(2)$. 
As for $G_1$ being contained in that group,
note that the $\Sp(2)$-representation $\CC^4$ restricts 
to $\CC^2\oplus\CC^{2*}$ along the embedding $\U(2)\subset\Sp(2)$,
so the result follows from the following 
representation theoretic lemma. 
 
\begin{lem}
If $V$ and $W$ are representations of complex, resp., quaternionic type, 
then $(V\oplus V^*)\otimes_{\mathbb H}W^*$ is equivalent as a real representation
to the realification of $V\otimes_{\mathbb C}W$. 
\end{lem}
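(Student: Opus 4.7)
The plan is to deduce the equivalence from a short chain of natural isomorphisms, requiring no choice beyond a $G$-invariant Hermitian form on $V$. First I would view $\mathbb H$ as a $(\mathbb C,\mathbb H)$-bimodule, with $\mathbb C$ acting on the left by multiplication inside $\mathbb H$ and $\mathbb H$ acting on the right by multiplication. Using a $G$-invariant Hermitian form on $V$ (which exists because $G$ is compact) to obtain the $G$-equivariant anti-linear identification $V^*\cong\ol V$, I would establish a $G$-equivariant isomorphism of right $\mathbb H$-modules
\[
V\otimes_{\mathbb C}\mathbb H \; \cong \; V\oplus V^*.
\]
The underlying complex decomposition is $V\otimes_{\mathbb C}\mathbb H=V\otimes 1\oplus V\otimes j$, and the relation $j\lambda=\bar\lambda\,j$ in $\mathbb H$ shows that when the right $\mathbb H$-action is restricted to $\mathbb C$, it coincides with the standard complex structure on the first summand and with its conjugate on the second, thereby identifying $V\otimes j$ with $\ol V\cong V^*$.

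Next I would invoke associativity of tensor products to write
\[
(V\oplus V^*)\otimes_{\mathbb H} W^* \; \cong \; (V\otimes_{\mathbb C}\mathbb H)\otimes_{\mathbb H} W^* \; \cong \; V\otimes_{\mathbb C}(\mathbb H\otimes_{\mathbb H} W^*) \; \cong \; V\otimes_{\mathbb C} W^*,
\]
the last step being the standard unit isomorphism. The right-hand side, viewed merely as a real vector space, is the realification of the complex $G$-representation $V\otimes_{\mathbb C}W^*$.

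Finally, I would exploit the defining feature of quaternionic type: $W$ carries a $G$-invariant non-degenerate complex-bilinear skew-symmetric form, which provides a $G$-equivariant complex-linear isomorphism $W\cong W^*$, and hence $V\otimes_{\mathbb C}W^*\cong V\otimes_{\mathbb C}W$ as complex $G$-representations. Combined with the previous step, this yields the required equivalence of real $G$-representations.

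The hard part will be the first step: although the identification of $V\oplus V^*$ with $V\otimes_{\mathbb C}\mathbb H$ is folkloric, it requires careful bookkeeping of the several $\mathbb C$-structures present on $\mathbb H$ (in particular, the fact that the restriction of the right $\mathbb H$-action to $\mathbb C$ differs from the left $\mathbb C$-action on the $j$-summand by conjugation), and one must verify that the canonical quaternionic structure on $V\oplus V^*$ inherited from the ambient $\Sp(n)$-representation $\mathbb H^n\cong\mathbb C^n\oplus(\mathbb C^n)^*$ agrees with the one transported from the right $\mathbb H$-action on $\mathbb H$. Once this is in place, the remaining steps are formal.
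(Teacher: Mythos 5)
Your argument is correct, but it proves the lemma by a genuinely different route than the paper. The paper works at the level of complexifications: the complexification of $(V\oplus V^*)\otimes_{\mathbb H}W^*$ is $(V\oplus V^*)\otimes_{\mathbb C}W$, the complexification of the realification of $V\otimes_{\mathbb C}W$ is $(V\otimes_{\mathbb C}W)\oplus (V\otimes_{\mathbb C}W)^*$, and these coincide because $W\cong W^*$ over $\mathbb C$; one then concludes from the standard fact that two real representations of a compact group with equivalent complexifications are equivalent over $\mathbb R$. You instead construct the equivalence directly: the bimodule identification $V\oplus V^*\cong V\otimes_{\mathbb C}\mathbb H$ (the invariant Hermitian form giving $V\otimes j\cong\overline V\cong V^*$), associativity and the unit isomorphism for the balanced tensor product over $\mathbb H$, and finally self-duality of $W$ via its invariant symplectic form. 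Your route is more explicit — it exhibits the $G$-equivariant real isomorphism rather than inferring its existence from characters — at the cost of the bookkeeping you flag; note that the compatibility worry is mild, since for $V$ of complex type the $G$-invariant quaternionic structure on $V\oplus V^*$ is unique up to equivalence, and in the intended application it is precisely the one induced from $\mathbb H^n\cong\mathbb C^n\otimes_{\mathbb C}\mathbb H$ under $\U(n)\subset\Sp(n)$. Both proofs ultimately rest on the same two inputs, $\overline V\cong V^*$ and $W\cong W^*$; the paper's version is shorter but non-constructive, yours is longer but makes the right $\mathbb H$-module structure and the resulting isomorphism transparent.
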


\begin{proof}
The representations have equivalent complexifications.
Indeed the complexification of the first representation is 
$(V\oplus V^*)\otimes_{\mathbb C}W$ whereas that of the second is 
$(V\otimes_{\mathbb C} W)\oplus(V\otimes_{\mathbb C} W)^*$, where $W\cong W^*$
over $\CC$.  
\end{proof}

If $\dim X\geq 5$, then the foliation $(\sphere^8,\fol_0)$ has leaves of 
dimension $\leq 3$ and, by \cite{Rad12}, it is homogeneous. We claim that 
there are no composed homogeneous foliations in this case.

First of all, the regular 
leaves of $\fol_0$ cannot have dimension~$1$ o~$2$ 
(i.e., $\dim X\neq 6$, $7$). In fact, in those cases $\fol_0$ would have to 
be generated by a representation $H\subset \SO(9)$, where $H=S^1$ or $T^2$. 
In particular, $H$ would be contained in a maximal torus of $\SO(9)$, and 
every such maximal torus acts on $\sphere^8$ fixing at least two antipodal 
points. In particular the diameter of $X$ would be $\pi$ which contradicts 
our assumption.

We are thus left with the case in which $(\sphere^8,\fol_0)$ is homogeneous
under a closed connected subgroup $H$ of $\SO(9)$ 
and $\dim X=5$. For the same reasons above, $\fol_0$ cannot be generated by 
a $T^3$-action. The principal orbits are $3$-dimensional with effective 
(transitive) actions of $H$. Therefore a principal isotropy group 
$H_{princ}$ does not contains a normal subgroup of $H$, $H_{princ}$ is a 
subgroup of $\OO(3)$, $\dim H\leq6$ and equality holds if and only if 
$H$ is locally isomorphic to $\SU(2)\times\SU(2)$. 
We deduce that $H$ is one of $\SU(2)$, $\SU(2)\times T^1$,  
$\SU(2)\times \SU(2)$, up to cover.

The only almost effective $9$-dimensional representation of $\SU(2)\times\SU(2)$ without fixed directions is $\rho_3\hat{\otimes}\rho_3$, 
which has $6$-dimensional principal orbits. 

Assume $H=\SU(2)\times T^1$ and $V$ is a $9$-dimensional
representation with cohomogeneity~$6$ and no fixed directions. 
The identity component of $H_{princ}$ on $V$ is a circle with non-trivial
projection into $\SU(2)$. It follows that  
the only admissible irreducible
components of $V$ are $(\SU(2),\RR^3)$, $(\U(2),\CC^2)$, $(T^1,\CC)$.
Since $9$ is odd, the first representation must occur exactly once. 
We get two possiblities: $\RR^3\oplus\CC^2\oplus\CC$ and $\RR^3\oplus\CC\oplus\CC\oplus\CC$. 
The first one has trivial principal isotropy groups, so it is excluded. 
The second one can be extended to an action of $\ol H=\SU(2)\times T^3$
acting on $\sphere^8$ with cohomogeneity~$3$. If $\fol_0\circ \fol_C$ were homogeneous, induced by some group $G$, then the extension $\ol{H}$ of $H$ would induce an extension $\ol{G}$ of $G$ that would act on $\sphere^{15}$ with cohomogeneity~$3$. This action would be non-polar
and irreducible, however there is no such group~\cite[Table~1]{G-L}.


The only 9-dimensional representations of $H=\SU(2)$ without 
fixed directions are $\lambda_9$, $\mu_2\oplus \lambda_5$ and $\rho_3\oplus\rho_3\oplus \rho_3$.

The representation $\rho_3\oplus\rho_3\oplus \rho_3$ can be extended to 
an action of $\ol{H}=\SO(3)^3$ via the outer sum 
$\rho_3\hat{\oplus}\rho_3\hat{\oplus} \rho_3$, acting on $\sphere^8$ with 
cohomogeneity $2$. If $\fol_0\circ \fol_C$ were homogeneous, induced by 
some group $G$, then the extension $\ol{H}$ of $H$ would induce an 
extension $\ol{G}$ of $G$, that would act on $\sphere^{15}$, with quotient 
isometric to $\frac12\sphere^2_{+++}$ and 
three most singular orbits of dimension $9$ (they would be preimages of 
most singular $\ol{H}$ orbits, of dimension $2$). However, from the 
classification of non-polar 
irreducible isometric actions of cohomogeneity 2 on $\sphere^{15}$
there is no such group~\cite{St}, 
and therefore $\fol_0\circ \fol_C$ cannot be homogeneous in this case.

The representation $\mu_2\oplus \lambda_5$ can be extended to an action of $\ol{H}=\SU(2)\times\SU(2)$ via the representation $\mu_2\hat{\oplus}\lambda_5$, again acting on $\sphere^8$ with cohomogeneity 2. If $\fol_0\circ \fol_C$ were homogeneous, induced by some group $G$, then the extension $\ol{H}$ of $H$ would induce an extension $\ol G$ of $G$, with quotient isometric to $\frac12\sphere^8/\ol{H}=\frac12(\sphere^2_+/D_3)$, where $D_3$ denotes a dihedral group. 
The group $\ol{G}$ must then be $\Sp(1)\cdot\Sp(2)$~\cite{St}, which is $13$-dimensional and thus acts on $\sphere^{15}$ with finite principal isotropy. In particular $G$ must act with finite principal isotropy as well, and since the cohomogeneity of $H$ on~$\sphere^8$ is~$5$, we have $\dim G=10$. However, a quick check shows that there are no~$10$-dimensional groups of rank at most~$3$
acting irreducibly (and non-polarly) 
on~$\RR^{16}$. In particular in this case $\fol_0\circ\fol_C$ cannot be homogeneous.

The representation $\lambda_9$ has isolated singular orbits,  and therefore the quotient $X$ has no boundary (compare~\cite[\S~11.2]{G-L}). Now suppose that the composed foliation $\fol_0\circ\fol_C$ is homogeneous, given by the action of $G$ on $\sphere^{15}$. Since the quotient $\frac12X$ has no boundary, there are no \emph{nontrivial reductions} of $(G,\RR^{16})$, i.e., there are no other representations $(G',\RR^n)$ with $\dim G'<\dim G$ such that $\sphere^{n-1}/G'$ is isometric to $\sphere^{15}/G=\frac12X$, cf.~\cite[Prop.~5.2]{G-L}. In particular, $G$ must act with trivial principal isotropy, since otherwise we could produce a nontrivial reduction~\cite[p.~2]{G-L}. Since the principal isotropy is trivial and $\dim X=5$, again it must be $\dim G=10$.
The only~10-dimensional group acting irreducibly (and non-polarly)
on~$\RR^{16}$ is $G=\SU(2)^3\times\U(1)$ acting by
$\hat\otimes^3(\mu_2)\hat\otimes\mu_1$; however a pure tensor 
$v_1\otimes v_2\otimes v_3$ has isotropy subgroup $T^3$, so this action
has as an orbit of dimension~$7$. Since $\lambda_9$ has no fixed 
points in $\sphere^8$, this shows that the $G$-orbits cannot yield a 
foliation of the form~$\fol_0\circ\fol_C$.

\section{Non-proper actions}\label{non-closed} 

We treat the cases of~$C=C_{9,1}$ and~$C=C_{8,1}$ simultaneously. 
Suppose $\fol_0\circ\fol_C$ is a homogeneous composed foliation 
of $\sphere^{31}$, resp., $\sphere^{15}$ 
given by the orbits of a non-closed connected Lie subgroup $G$ of $\SO(32)$, 
resp., $\SO(16)$. Then the closure of $G$ is a closed connected 
subgroup whose orbits also comprise a homogeneous composed foliation,
so it is already described in sections~\ref{c91} or~\ref{c81}. However,
most of the groups therein listed admit no dense non-closed 
connected Lie subgroups in view of the following:

\begin{lem}\label{dense}
A compact connected Lie group $U$ with
at most a one-dimensional center admits no dense non-closed 
connected Lie subgroups.
\end{lem}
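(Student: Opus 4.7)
The plan is to suppose for contradiction that $H$ is a non-closed connected Lie subgroup of $U$ whose closure is $\bar H = U$, and to show that the Lie algebra $\mathfrak h$ of $H$ must be all of $\mathfrak u$ or must equal $[\mathfrak u,\mathfrak u]$, both of which yield that $H$ is closed.

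First I would establish the standard fact that $\mathfrak h$ is an ideal in $\bar{\mathfrak h}=\mathfrak u$ and that $\mathfrak u/\mathfrak h$ is abelian. For this, note that $\operatorname{Ad}(g)\mathfrak h=\mathfrak h$ for every $g\in H$ (since $H$ normalizes itself), and the set $\{g\in U:\operatorname{Ad}(g)\mathfrak h=\mathfrak h\}$ is closed; by density of $H$ in $U$, it contains all of $U$, so $\mathfrak h$ is $\operatorname{Ad}(U)$-invariant, i.e., an ideal in $\mathfrak u$. Next, for $Y\in\mathfrak h$ and $X\in\mathfrak u$ we have $[Y,X]\in\mathfrak h$, hence $\operatorname{Ad}(\exp tY)X\equiv X\pmod{\mathfrak h}$, so $\operatorname{Ad}(H)$ acts trivially on $\mathfrak u/\mathfrak h$; by continuity and density, so does $\operatorname{Ad}(U)$, and differentiating gives $[\mathfrak u,\mathfrak u]\subseteq\mathfrak h$.

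Now I would invoke the decomposition $\mathfrak u=\mathfrak z(\mathfrak u)\oplus[\mathfrak u,\mathfrak u]$ valid for any compact Lie algebra. Since $\mathfrak h\supseteq[\mathfrak u,\mathfrak u]$, we get $\mathfrak h=[\mathfrak u,\mathfrak u]\oplus(\mathfrak h\cap\mathfrak z(\mathfrak u))$, and the hypothesis $\dim\mathfrak z(\mathfrak u)\leq1$ forces either $\mathfrak h=\mathfrak u$ or $\mathfrak h=[\mathfrak u,\mathfrak u]$. In the first case, $H$ is an immersed subgroup of $U$ with the same Lie algebra and, since $U$ is connected, $H=U$, which is closed. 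In the second case, $[U,U]$ is a closed connected subgroup of $U$ with Lie algebra $[\mathfrak u,\mathfrak u]$ (compactness of $U$ makes the commutator subgroup closed), and since a connected Lie subgroup is determined by its Lie algebra, $H=[U,U]$, again closed. Either way we contradict that $H$ is non-closed.

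The main technical point, and the one I would want to state carefully, is the passage from density to $[\mathfrak u,\mathfrak u]\subseteq\mathfrak h$; everything afterwards is linear algebra combined with the compact decomposition $\mathfrak u=\mathfrak z(\mathfrak u)\oplus[\mathfrak u,\mathfrak u]$ and the dimension bound on the center. It is precisely this bound on the center that rules out the classical obstruction — an irrational one-parameter subgroup of a torus of rank $\geq 2$ — since when $\dim\mathfrak z(\mathfrak u)\leq 1$, the abelian quotient $\mathfrak u/\mathfrak h$ can be at most one-dimensional and cannot accommodate such irrational winding.
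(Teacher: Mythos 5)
Your proof is correct, but it takes a genuinely different route from the paper's. The paper argues at the group level: it first rules out that a dense connected proper subgroup $G$ can be normal in $U$ (combining the hypothesis on the center with Ragozin's theorem that normal subgroups of semisimple Lie groups are closed), and then obtains a contradiction from the normalizer $N$ of $G$ in $U$, which on the one hand is proper and hence cannot be closed by density, and on the other hand is closed because it coincides with the normalizer of the Lie algebra of $G$. You work instead at the Lie algebra level: the same density-plus-closedness trick, applied first to $\{g\in U:\operatorname{Ad}(g)\mathfrak h=\mathfrak h\}$ and then to the induced $U$-action on $\mathfrak u/\mathfrak h$, shows directly that $\mathfrak h$ is an ideal containing $[\mathfrak u,\mathfrak u]$; the splitting $\mathfrak u=\mathfrak z(\mathfrak u)\oplus[\mathfrak u,\mathfrak u]$ and $\dim\mathfrak z(\mathfrak u)\le 1$ then leave only $\mathfrak h=\mathfrak u$ or $\mathfrak h=[\mathfrak u,\mathfrak u]$, both of which integrate to closed subgroups (the latter because the commutator subgroup of a compact connected group, i.e.\ the analytic subgroup of the compact semisimple subalgebra $[\mathfrak u,\mathfrak u]$, is closed). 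The shared ingredient is the observation that density forces $\operatorname{Ad}(U)$-invariance of $\mathfrak h$ (equivalently, that the normalizer of the Lie algebra is closed); beyond that, your argument is more self-contained, replacing the citation of Ragozin by elementary structure theory of compact Lie algebras and making explicit where the center hypothesis enters, whereas the paper's proof is shorter modulo the reference.
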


\begin{proof}
Suppose, to the contrary, that $G$ is a dense connected proper 
Lie subgroup of~$U$. 
If $G$ is a normal subgroup of~$U$, then either $G$ is contained 
in the semisimple part of $U$ or it contains the center of $U$.
Owing to~\cite{Ra}, normal subgroups of semisimple Lie groups are closed. 
It follows that $G$ cannot be normal in~$U$.
Let $N$ be the normalizer of $G$
in $U$. This is a proper
subgroup of $U$, thus cannot be closed by denseness of $G$.
On the other hand, $N$ must be closed in $U$
because it coincides with the normalizer in~$U$
of the Lie algebra of~$G$
(here we use connectedness of $G$), a contradiction. 
\end{proof}

\medskip

The closed groups $U$ yielding homogeneous composed foliations 
described in sections~\ref{c91} or~\ref{c81} which do not satisfy
the assumptions of Lemma~\ref{dense} occur in case $C=C_{8,1}$ only 
and have two dimensional tori as
centers, and they are of two types:

\smallskip

\paragraph{1.} $U=K_+\cdot K_-$ where
\[ K_{\pm}\in\{\SU(4)\cdot\U(1), \Sp(2)\cdot\U(1)\}. \]
In both cases there are dense connected Lie subgroups $G$ which however 
yield orbit equivalent subactions. 

\paragraph{2.} $U=K_+\cdot L\cdot K_-$ where $K_{\pm}=\U(1)$,
$L=\SU(4)$, and $K_+\cdot L$ acts effectively on $\CC^4\oplus0$ and 
$L\cdot K_-$ acts effectively on $0\oplus\CC^4$. The non-closed dense connected 
Lie subgroups of $U$ are of the form $G=\RR\times\SU(4)$, where $\RR$
is an irrational line in the center $T^2$ of $U$. Note that $G$ and $U$ 
share a common singular orbit through $p\in\CC^4\oplus0$. Moreover
the isotropy groups at~$p$ act with the same orbits in $0\oplus\CC^4$. 
It follows that $G$ and $U$ are orbit equivalent on $\CC^4\oplus\CC^4$. 

Finally, we get no homogenous composed foliations with non-closed 
leaves.


\begin{thebibliography}{FKM81}

\bibitem[AR15]{A-R}
M.~M. Alexandrino and M.~Radeschi, \emph{Isometries between leaf spaces},
  {Geom.\ Dedicata} \textbf{174} (2015), no.~1, 193--201.

\bibitem[Bry]{Bry}
R.~Bryant, \emph{Remarks on spinors in low dimension}, Lecture notes available
  at $\langle$http://www.math.duke.edu/~bryant/Spinors.pdf$\rangle$.

\bibitem[Dad85]{D}
J.~Dadok, \emph{Polar coordinates induced by actions of compact {L}ie groups},
  {Trans. Amer. Math. Soc.} \textbf{288} (1985), 125--137.

\bibitem[Dyn00]{Dyn1}
E.~B. Dynkin, \emph{The maximal subgroups of the classical groups}, Selected
  papers of E. B. Dynkin with commentary (A.~L.~Onishchik A.~A.~Yushkevich, G.
  M.~Seitz, ed.), American Mathematical Society and International Press, 2000,
  pp.~37--174.

\bibitem[EH99]{EH}
J.~Eschenburg and E.~Heintze, \emph{On the classification of polar
  representations}, {Math. Z.} \textbf{232} (1999), 391--398.

\bibitem[FKM81]{FKM}
D.~Ferus, H.~Karcher, and H.~F. Muenzner, \emph{{Cliffordalgebren and neue
  isoparametrische Hyperflaechen}}, {Math. Z.} \textbf{177} (1981), 479--502.

\bibitem[GL14a]{G-L3}
C.~Gorodski and A.~Lytchak, \emph{Isometric actions on unit spheres with an
  orbifold quotient}, E-print arXiv:1407.5863 [math.DG], 2014.

\bibitem[GL14b]{G-L}
\bysame, \emph{On orbit spaces of representations of compact {L}ie groups},
  {J.\ reine angew.\ Math.\/} \textbf{691} (2014), 61--100.

\bibitem[GL15]{G-L2}
\bysame, \emph{Representations whose minimal reduction has a toric identity
  component}, {Proc.\ Amer.\ Math.\ Soc.\/} \textbf{143} (2015), 379--386.

\bibitem[HL71]{H-L}
W.-Y. Hsiang and H.~B. {Lawson Jr.}, \emph{Minimal submanifolds of low
  cohomogeneity}, {J. Differential Geom.} \textbf{5} (1971), 1--38.

\bibitem[HPT88]{HPT}
W.-Y. Hsiang, R.~S. Palais, and C.-L. Terng, \emph{The topology of
  isoparametric submanifolds}, {J. Differential Geom.} \textbf{27} (1988),
  423--460.

\bibitem[KP03]{K-P}
A.~Kollross and F.~Podest\`a, \emph{Homogeneous spaces with polar isotropy},
  {manuscripta math.\/} \textbf{110} (2003), 487--503.

\bibitem[LT10]{L-T}
A.~Lytchak and G.~Thorbergsson, \emph{Curvature explosion in quotients and
  applications}, J. Differential Geom. \textbf{85} (2010), 117--140.

\bibitem[Lyt10]{Lyt}
A.~Lytchak, \emph{Geometric resolution of singular {R}iemannian foliations},
  Geom. Dedicata \textbf{149} (2010), 379--395.

\bibitem[MT64]{MT}
M.~Mimura and H.~Toda, \emph{Homotopy groups of {$\SU(3)$, $\SU(4)$ and
  $\Sp(2)$}}, {J.\ Math.\ Kyoto Univ.\/} \textbf{3} (1964), no.~2, 217--250.

\bibitem[OT75]{OT1}
H.~Ozeki and M.~Takeuchi, \emph{On some types of isoparametric hypersurfaces in
  spheres, {I}}, {T\^ohoku Math. J.} \textbf{27} (1975), 515--559.

\bibitem[Rad12]{Rad12}
M.~Radeschi, \emph{Low dimensional singular {R}iemannian foliations in
  spheres}, E-print arXiv:1203.6113 [math.DG], 2012.

\bibitem[Rad14]{Rad14}
\bysame, \emph{Clifford algebras and new singular {Riemannian} foliations in
  spheres}, {Geom.\ Funct.\ Anal.\/} \textbf{24} (2014), no.~5, 1660--1682.

\bibitem[Rag72]{Ra}
D.~L. Ragozin, \emph{A normal subgroup of a semisimple {L}ie group is closed},
  {Proc.\ Amer.\ Math.\ Soc.\/} \textbf{32} (1972), 632--633.

\bibitem[Str94]{St}
E.~Straume, \emph{On the invariant theory and geometry of compact linear groups
  of cohomogeneity $\leq3$}, {Diff. Geom. and its Appl.} \textbf{4} (1994),
  1--23.

\bibitem[Wie14]{Wiesen1}
S.~Wiesendorf, \emph{Taut submanifolds and foliations}, {J.\ Differential
  Geom.\/} \textbf{96} (2014), no.~3, 457--505.

\end{thebibliography}

\providecommand{\bysame}{\leavevmode\hbox to3em{\hrulefill}\thinspace}
\providecommand{\MR}{\relax\ifhmode\unskip\space\fi MR }
\providecommand{\MRhref}[2]{%
  \href{http://www.ams.org/mathscinet-getitem?mr=#1}{#2}
}
\providecommand{\href}[2]{#2}

\end{document}